\newtheorem{theorem}{Theorem}[section]
\newtheorem*{theorem*}{Theorem}
\newtheorem{proposition}[theorem]{Proposition}
\newtheorem{corollary}[theorem]{Corollary}
\newtheorem{lemma}[theorem]{Lemma}
\newtheorem{question}[theorem]{Question}
\theoremstyle{definition}
  \newtheorem{definition}[theorem]{Definition}
\theoremstyle{remark}
  \newtheorem{remark}[theorem]{Remark}
  \newtheorem{example}[theorem]{Example}
\numberwithin{equation}{section}
\title[On the duality between \textsc{cmc} surfaces in $\mathbb{E}(\kappa,\tau)$ and $\mathbb{L}(\kappa,\tau)$]{On the conformal duality between surfaces with constant mean curvature in $\mathbb{E}(\kappa,\tau)$ and $\mathbb{L}(\kappa,\tau)$}
\author{José M. Manzano}
\address{Department of Mathematics \\
King's College London \\
Strand WC2R 2LS London} 
\email{manzanoprego@gmail.com}
\thanks{This work contains some of the ideas derived from conversations of the author and Hojoo Lee, and also some of them are consequence of their collaboration. The author would like to express his gratitude to Hojoo for taking him to the world of divergence and duality, specially during his stays at KIAS in 2013 and 2014. This research was partially supported by Spanish MEC-Feder Research Project MTM2014-52368-P, and by the EPSRC Grant No. EP/M024512/1. }
\subjclass[2010]{Primary 53A10; Secondary 53C30}
\keywords{Minimal surfaces, constant mean curvature, homogeneous 3-manifolds, duality, gradient estimates, complete spacelike surfaces, conformal}
\begin{document}
\maketitle

\begin{abstract}
The main aim of this survey paper is to gather together some results concerning the Calabi type duality discovered by Lee~\cite{Lee} between certain families of spacelike graphs with constant mean curvature in Riemannian and Lorentzian homogeneous 3-manifolds with isometry group of dimension 4. The duality is conformal and swaps mean curvature and bundle curvature, and we will revisit it by giving a more general statement in terms of conformal immersions. This will show that many features in the theory of surfaces with mean curvature $\frac{1}{2}$ in $\mathbb{H}^2\times\mathbb{R}$ or minimal surfaces in the Heisenberg space have nice geometric interpretations in their dual Lorentzian counterparts. We will briefly discuss some applications such as gradient estimates for entire minimal graphs in Heisenberg space~\cite{MN} and the existence of complete spacelike surfaces~\cite{LeeMan}, and we will give an uniform treatment to the behavior of the duality with respect to ambient isometries. Finally, some open questions are posed in the last section.
\end{abstract}

\section{Introduction} 

The theory of constant mean curvature surfaces in homogeneous 3-manifolds has become relatively relevant during the last decades. Although all started in the ambient spaces with constant sectional curvature $\mathbb{R}^3$, $\mathbb{S}^3$ and $\mathbb{H}^3$, then product spaces $\mathbb{H}^2\times\mathbb{R}$ and $\mathbb{S}^2\times\mathbb{R}$ were also considered, and soon afterwards the Heisenberg space $\mathrm{Nil}_3$, the Berger spheres $\mathbb{S}^3_{\text{Berger}}$, and $\widetilde{\mathrm{Sl}}_2(\mathbb{R})$ (the universal cover of the special linear group equipped with some distinguished left-invariant metrics) came into scene. All these 3-manifolds, except for $\mathbb{H}^3$, lie in a 2-parameter family $\mathbb{E}(\kappa,\tau)$ with the property that they admit a Killing submersion with constant bundle curvature $\tau$ over the simply connected surface with constant curvature $\kappa$ (see definitions in Section~\ref{sec:ekt}). Abresch and Rosenberg's discovery of a geometric quadratic differential which is holomorphic on constant mean curvature surfaces~\cite{AR}, together with the Lawson type correspondence found by Daniel~\cite{Dan}, and the solution to the Bernstein problem in Heisenberg space given by Fern\'{a}ndez and Mira~\cite{FM} encouraged many geometers to work on this theory. We refer the reader to the introductory lecture notes written by Daniel, Hauswirth and Mira~\cite{DHM}.

It is worth pointing out that the spaces $\mathbb{E}(\kappa,\tau)$, $\kappa-4\tau^2\neq 0$, model all simply connected 3-manifolds with isometry group of dimension 4, but recently an increasing attention has been paid to the family of all simply connected homogeneous 3-manifolds, consisting essentially of Lie groups with arbitrary left-invariant metrics. A sample of this is the recent classification of spheres with constant mean curvature in such Lie groups by Meeks, Mira, P\'{e}rez and Ros~\cite{MMPR}. It would be interesting to know whether or not the ideas we will discuss below can be extended to the Lie-group setting. 

The spaces $\mathbb{E}(\kappa,\tau)$ have Lorentzian counterparts $\mathbb{L}(\kappa,\tau)$ also admitting a Killing-submersion structure over $\mathbb{M}^2(\kappa)$ with constant bundle curvature $\tau$, but in such a way that the Killing direction is timelike whereas the horizontal distribution is spacelike. Calabi stated in~\cite{Calabi} an interesting duality between minimal graphs in Euclidean space $\mathbb{R}^3=\mathbb{E}(0,0)$ and maximal graphs in Lorentz-Minkowski space $\mathbb{L}^3=\mathbb{L}(0,0)$, based on a clever trick using Poincar\'e's Lemma. Although this duality is often called Calabi duality, it seems that similar ideas had been already considered by other authors, see for instance the work of Catalan~\cite{Catalan} more than one century before. Calabi's duality has a natural interpretation in terms of null curves and holomorphic Weierstrass data (e.g., see~\cite{LLS} where the duality has been used to study conical singularities of maximal surfaces in $\mathbb{L}^3$).

Later on, Calabi's work was generalized to a correspondence between minimal graphs in $\mathbb{S}^2\times\mathbb{R}=\mathbb{E}(1,0)$ and maximal graphs in $\mathbb{S}^2\times\mathbb{R}_1=\mathbb{L}(1,0)$ by Albujer and Al\'{i}as~\cite{AA}, and to a correspondence between constant mean curvature $H$ graphs in $\mathbb{E}(\kappa,\tau)$ and spacelike graphs with constant mean curvature $\tau$ in $\mathbb{L}(\kappa,H)$ by Lee~\cite{Lee}. In Theorem~\ref{thm:duality} we will rewrite Lee's result in the more general language of conformal immersions by allowing the surfaces to be multigraphs rather than graphs. In order to get to this result, we will employ the classification of Killing submersions in~\cite{Man}, though the same statement can also be achieved by means of analytic continuation. The rest of Section~\ref{sec:duality} will be devoted to review other well-known properties of the duality, as well as to recover its explicit expression in coordinates as a first-order \textsc{pde} system, where one can see that the mean curvature equations become the integrability conditions of the system. Although we will give some examples, more of them can be found in~\cite{KL,Lee}.

In Section 3, we will revisit some results in~\cite{LeeMan} showing that the space $\mathbb{L}(\kappa,\tau)$ does not admit complete spacelike surfaces if $\kappa+4\tau^2>0$, and we will provide three proofs of this result of different nature. We will also consider \emph{dual-complete} surfaces rather than complete ones, which is nothing but assuming that the dual surface is complete in $\mathbb{E}(\kappa,H)$. We will use well-known results in the Riemannian setting to give a fairly complete classification of dual-complete spacelike surfaces in $\mathbb{L}(\kappa,\tau)$, see also~\cite{DHM,MR}. This will grant the opportunity to connect the duality with the theory of constant mean curvature surfaces in $\mathbb{E}(\kappa,\tau)$, and we will discuss the relation between the duality and Daniel correspondence~\cite{Dan}, see Remark~\ref{rmk:daniel}.  

Critical mean curvature surfaces in $\mathbb{E}(\kappa,\tau)$-spaces are those whose dual surfaces live in constant sectional curvature spaces (i.e., in the Lorentz-Minkowski space $\mathbb{L}^3=\mathbb{L}(0,0)$ or in the anti-de Sitter space $\mathbb{H}_1^3(\kappa)=\mathbb{L}(\kappa,\frac{1}{2}\sqrt{-\kappa})$), that is to say those whose mean curvature $H$ satisfies the relation $4H^2+\kappa=0$. This gives an outstanding interpretation of why surfaces with critical mean curvature have features that the rest of constant mean curvature surfaces fail to enjoy:
\begin{itemize} 
	\item The hyperbolic Gauss map for mean curvature $\frac{1}{2}$ surfaces in $\mathbb{H}^2\times\mathbb{R}$ discovered by Fern\'{a}ndez and Mira~\cite{FM}), the left-invariant Gauss map for minimal surfaces in $\mathrm{Nil}_3(\tau)$ given by Daniel~\cite{Dan2}, and the rest of harmonic Gauss maps for critical mean curvature surfaces, see~\cite{DFM}, correspond to the classical (hyperbolic) Gauss map of surfaces in $\mathbb{L}^3$ and $\mathbb{H}_1^3(\kappa)$, see also~\cite{Lee2}.
	\item The Abresch-Rosenberg differential of surfaces with critical mean curvature, which is associated with the harmonic Gauss maps, corresponds to the classical Hopf differential in $\mathbb{L}^3$ and $\mathbb{H}_1^3(\kappa)$.
\end{itemize}

In section 4, we will discuss an application in the other direction by employing a Lorentzian property (an estimate for entire graphs in $\mathbb{L}^3$ proved by Cheng and Yau) to get some gradient estimates for entire minimal graphs in $\mathrm{Nil}_3(\tau)$. This result is part of~\cite{MN}, though we will use it here to prove also that entire graphs with positive mean curvature in $\mathbb{L}^3$ have infinite area. 

Some other properties will be analized in Section 5. First, we will give a general new result for understanding the behaviour of the duality with respect to isometries, generalizing~\cite{Lee}. Basically we will prove that direct isometries preserving the unit Killing vector field behave well with respect to the duality, in the sense that they correspond to other isometries in the dual setting also preserving the Killing vector field. Such isometries represent a 4-parameter subgroup of isometries of $\mathbb{E}(\kappa,\tau)$ or $\mathbb{L}(\kappa,\tau)$, which is generically the total dimension of the isometry group. Nonetheless, in the cases the correspondence involves an space form, there are 2 extra dimensions in the isometry. Again this applies to the case of critical mean curvature, providing geometric 2-parameter deformations (not by ambient isometries) in the following families of surfaces:
\begin{enumerate}
	\item Surfaces in $\mathbb{E}(\kappa,\tau)$ with critical constant mean curvature $H$, i.e., such that $\kappa+4H^2=0$.
	\item Surfaces in $\mathbb{L}(\kappa,\tau)$ with critical constant mean curvature $H$, i.e., such that $\kappa-4H^2=0$.
\end{enumerate}
Note that the change of sign in the conditions $\kappa+4H^2=0$ and $\kappa-4H^2=0$ is not a mistake, and comes from the fact that space forms satisfy $\kappa-4\tau^2=0$ in the Riemannian case, and $\kappa+4\tau^2=0$ in the Lorentzian case (see Section~\ref{sec:ekt} and the comments below Proposition~\ref{prop:transform}).

In the last section, we will leave three open original questions whose solutions could improve the understanding of the correspondence in the opinion of the author. We also refer to~\cite{DHM} for a list of open conjectures in $\mathbb{E}(\kappa,\tau)$-spaces, though some of them have been already solved since the document was published in 2009.

\section{A conformal duality}\label{sec:duality}

\subsection{The spaces $\mathbb{E}(\kappa,\tau)$ and $\mathbb{L}(\kappa,\tau)$}\label{sec:ekt}

Let $\tau\in C^\infty(M)$ be a smooth function on a simply connected orientable Riemannian surface $M$. From~\cite{Man}, we get that there is a unique Riemannian submersion $\pi:\mathbb{E}\to M$ such that
\begin{enumerate}
 \item $\mathbb{E}$ is a simply connected orientable Riemannian 3-manifold,
 \item the fibers of $\pi$ are the integral curves of a unit Killing vector field $\xi\in\mathfrak{X}(\mathbb{E})$, and
 \item $\overline\nabla_X\xi=(\tau\circ\pi) X\times\xi$, for all $X\in\mathfrak{X}(\mathbb{E})$, where $\overline\nabla$ denotes the Levi-Civita connection in $\mathbb{E}$ and $\times$ stands for the cross product.
\end{enumerate} 
Such a Riemannian submersion is called a \emph{unit Killing submersion} with \emph{bundle curvature} $\tau$ over $M$ (this result also holds in the more general non-simply connected and non-unitary case, see~\cite{LerMan}). Similar arguments to those in~\cite{Man} lead to the existence and uniqueness of a Riemannian submersion $\pi:\mathbb{L}\to M$ fulfilling items 1--3 above, and such that $\mathbb{L}$ is Lorentzian and $\xi\in\mathfrak{X}(\mathbb{L})$ is timelike. 

Let us assume that $\pi:\mathbb{E}\to M$ is a (Riemannian or Lorentzian) Killing submersion over $M$ with bundle curvature $\tau$, and whose fibers have infinite length. Given an open set $\Omega\subset M$ and a smooth section $F_0:\Omega\to\mathbb{E}$ (the existence of $F_0$ is guaranteed since the fibers have infinite length, see~\cite{LerMan,Man}), any section of $\pi$ over $\Omega$ can be parameterized in terms of a function $u:\Omega\to\mathbb{R}$ as
\begin{equation}\label{eqn:grafo}
F_u:\Omega\to\mathbb{E},\qquad F_u(p)=\phi_{u(p)}(F_0(p)),
\end{equation}
where $\{\phi_t:t\in\mathbb{R}\}$ is the 1-parameter group of isometries associated with $\xi$, the so-called vertical translations. Such a section $F_u$ is usually called a Killing (or vertical) graph over $\Omega$.

Let $d$ be the signed Killing distance to $F_0$ determined unambiguously by the relation $\phi_{d(p)}(F_0(p))=p$ for all $p\in\mathbb{E}$, and consider the projection of its gradient $Z=\pi_*(\overline\nabla d)\in\mathfrak{X}(M)$. Given $u\in C^2(\Omega)$, we define the \emph{generalized gradient} of $u$ as the vector field 
\begin{equation}\label{eqn:G}
Gu=\nabla u-\epsilon Z\in\mathfrak{X}(M),
\end{equation}
where we will take $\epsilon=1$ or $\epsilon=-1$ depending on whether $\mathbb{E}$ is Riemannian or Lorentzian, respectively. It is easy to check that $F_u$ defines a spacelike surface (i.e., the induced metric in $F_u$ is positive definite) if and only if 
\begin{equation}\label{eqn:spacelike}
1+\epsilon\|Gu\|^2>0,
\end{equation} 
in which case the mean curvature of $F_u$ can be computed (as a function on $\Omega$) as
\begin{equation}\label{eqn:H}
H=\frac{1}{2}\mathrm{div}\left(\frac{Gu}{\sqrt{1+\epsilon\|Gu\|^2}}\right),
\end{equation}
where the divergence, gradient and norm are computed in the geometry of $M$. Although there is no explicit dependence upon the bundle curvature in~\eqref{eqn:G} or~\eqref{eqn:H}, it is encoded in the vector field $Z$ in the sense that
\begin{equation}\label{eqn:tau}
\mathrm{div}(JZ)=-2\tau,
\end{equation}
where $J$ is the $\frac{\pi}{2}$-rotation in the tangent bundle $TM$, see~\cite[Lemma~3.2]{LerMan}. In the duality below, we will essentially exploit the fact that both $H$ and $\tau$ admit the divergence type expressions~\eqref{eqn:H} and~\eqref{eqn:tau}.

Killing submersions give a good framework for studying simply connected homogeneous Riemannian or Lorentzian 3-manifolds with isometry group of dimension 4. These 3-manifolds are classified in two families $\mathbb{E}(\kappa,\tau)$ and $\mathbb{L}(\kappa,\tau)$, where $\mathbb{E}(\kappa,\tau)$ (resp. $\mathbb{L}(\kappa,\tau)$) stands for the total space of the unique Riemannian (resp. Lorentzian) Killing submersion with constant bundle curvature $\tau$ over $\mathbb{M}^2(\kappa)$, the simply connected surface of constant curvature $\kappa$. For each particular choice of $(\kappa,\tau)$ we get the following spaces (we use an index 1 to distinguish the Lorentzian case):
\begin{center}
 \begin{tabular}{c||c|c|c}
 &$\kappa<0$&$\kappa=0$&$\kappa>0$\\\hline\hline
 $\tau=0$&$\mathbb{H}^2(\kappa)\times\mathbb{R}$&$\mathbb{R}^3$&$\mathbb{S}^2(\kappa)\times\mathbb{R}$\\
 &$\mathbb{H}^2(\kappa)\times\mathbb{R}_1$&$\mathbb{L}^3$&$\mathbb{S}^2(\kappa)\times\mathbb{R}_1$\\\hline
 $\tau\neq 0$&$\widetilde{\mathrm{Sl}}_2(\mathbb{R})(\kappa,\tau)$&$\mathrm{Nil}_3(\tau)$&$\mathbb{S}^3_{\text{Berger}}(\kappa,\tau)$\\
 &$\widetilde{\mathrm{Sl}}^1_2(\mathbb{R})(\kappa,\tau)$&$\mathrm{Nil}^1_3(\tau)$&$\mathbb{S}^{3,1}_{\text{Berger}}(\kappa,\tau)$
 \end{tabular}\end{center} 
There are some space forms hidden in this table, whose isometry group has dimension 6, namely the Riemannian $\mathbb{R}^3$ and $\mathbb{S}^3(\kappa)$ show up in the family $\mathbb{E}(\kappa,\tau)$ with $\kappa-4\tau^2=0$, whereas the Lorentz-Minkowski space $\mathbb{L}^3$ and the anti-de Sitter space $\mathbb{H}^3_1(\kappa)$ are $\mathbb{L}(\kappa,\tau)$-spaces with $\kappa+4\tau^2=0$. We remark that the hyperbolic space $\mathbb{H}^3(\kappa)$ or the de-Sitter space $\mathbb{S}^3_1(\kappa)$ do not admit unit Killing vector fields.

\subsection{An extended conformal duality}

In the previous section, a graph in a Riemannian or Lorentzian Killing submersion $\pi:\mathbb{E}\to M$ has been defined as a smooth section of $\pi$, i.e., a surface $\Sigma$ in $\mathbb{E}$ such that $\pi_{|\Sigma}:\Sigma\to M$ is a diffeomorphism onto its image, and in particular a graph is transversal to the vertical Killing vector field $\xi$. The graph is said entire if $\pi_{|\Sigma}$ is also surjective. In the Riemannian case, these conditions can be weakened to any of the following three equivalent conditions:
\begin{enumerate}
	\item[i.] $\pi_{|\Sigma}$ is a local diffeomorphism,
	\item[ii.] $\Sigma$ is nowhere vertical, i.e., everywhere transversal to $\xi$,
	\item[iii.] the angle function $\nu=\langle N,\xi\rangle$ has no zeroes, where $N$ is a unit normal to $\Sigma$. 
\end{enumerate}
If any of these properties hold, the surface $\Sigma$ is usually called a vertical (spacelike) \emph{multigraph}. In the Lorentzian case, spacelike surfaces are automatically transversal to the Killing direction, so multigraphs turn out to be natural objects in the Lorentzian setting.

As pointed out in the introduction our version of the duality assumes that the surfaces are multigraphs rather than graphs, which generalizes the strictly graphical duality given by Lee~\cite{Lee} in the case $\kappa\leq 0$, and by Albujer and Al\'{i}as~\cite{AA} in the case of $\mathbb{S}^2\times\mathbb{R}$. Although we will assume simple connectedness in the statement, it can be applied to any conformal immersion by considering the universal cover of the Riemann surface we are working with.

\begin{theorem}\label{thm:duality}
Let $\Sigma$ be a simply connected Riemann surface, and let $\kappa,\tau,H\in\mathbb{R}$. There is a correspondence between
\begin{enumerate}
	\item[(a)] Conformal immersions $X:\Sigma\to\mathbb{E}(\kappa,\tau)$ with constant mean curvature $H$ and nowhere vertical.
	\item[(b)] Conformal spacelike immersions $\widetilde X:\Sigma\to\mathbb{L}(\kappa,H)$ with constant mean curvature $\tau$. 
\end{enumerate}
The corresponding immersions $X$ and $\widetilde X$ are determined up to a vertical translation and satisfy $\pi\circ X=\widetilde\pi\circ\widetilde X$.
\end{theorem}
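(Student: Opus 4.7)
My plan is to combine the divergence-form identities~\eqref{eqn:H} and~\eqref{eqn:tau} with Poincar\'{e}'s lemma, following the classical Calabi trick but carried out on the simply connected Riemann surface $\Sigma$ rather than on a planar domain.

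I would first reduce the correspondence to a scalar PDE on $\Sigma$. Since $X$ is nowhere vertical, $\pi\circ X\colon\Sigma\to\mathbb{M}^2(\kappa)$ is a local diffeomorphism, and the pullback of $\pi$ along $\pi\circ X$ is a unit Killing submersion over $\Sigma$ with constant bundle curvature $\tau$ by the classification in~\cite{Man,LerMan} (passing to the universal cover of the target if the original fibers are compact). The simple connectedness of $\Sigma$ and the resulting infinite-fiber property produce a global section, so $X$ can be written as a vertical graph $F_u$ for some $u\in C^\infty(\Sigma)$, and~\eqref{eqn:H} reads $\mathrm{div}(Gu/\sqrt{1+\|Gu\|^2})=2H$. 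Pulling back $\widetilde\pi\colon\mathbb{L}(\kappa,H)\to\mathbb{M}^2(\kappa)$ to $\Sigma$ in the same way provides a Lorentzian Killing submersion over $\Sigma$ with bundle curvature $H$, whose associated projected vector field $Z_L$ satisfies $\mathrm{div}(JZ_L)=-2H$ by~\eqref{eqn:tau}.

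The duality itself is now a direct Poincar\'{e}-lemma argument: the vector field $Gu/\sqrt{1+\|Gu\|^2}+JZ_L$ has divergence $2H-2H=0$, so its $J$-rotation is a gradient. Hence there exists $\widetilde u\in C^\infty(\Sigma)$, unique up to an additive constant, with
\begin{equation*}
G\widetilde u=\nabla\widetilde u+Z_L=J\!\left(\frac{Gu}{\sqrt{1+\|Gu\|^2}}\right),
\end{equation*}
where I have used $\epsilon=-1$ in~\eqref{eqn:G} for the Lorentzian target. The spacelike condition~\eqref{eqn:spacelike} for $\widetilde u$ then reduces to $\|Gu\|^2<1+\|Gu\|^2$ and is automatic. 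A short computation using $1-\|G\widetilde u\|^2=1/(1+\|Gu\|^2)$ yields $G\widetilde u/\sqrt{1-\|G\widetilde u\|^2}=JGu$, whose divergence equals $\mathrm{div}(J\nabla u)-\mathrm{div}(JZ_E)=0-(-2\tau)=2\tau$ by~\eqref{eqn:tau}; this is precisely the Lorentzian CMC-$\tau$ equation for $\widetilde u$ in $\mathbb{L}(\kappa,H)$ via~\eqref{eqn:H}.

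The additive constant in $\widetilde u$ encodes the claimed vertical translation ambiguity, and $\pi\circ X=\widetilde\pi\circ\widetilde X$ holds by construction since $\widetilde X$ is built as a section over the same base points as $X$. Reversibility is automatic from the symmetry of the construction under the swap $(\mathbb{E}(\kappa,\tau),H)\leftrightarrow(\mathbb{L}(\kappa,H),\tau)$, noting that the Lorentzian-to-Riemannian direction needs no spacelike check since $1+\|Gu\|^2>0$ trivially. The step I expect to be the main obstacle is making the multigraph reduction precise: one must verify that the pullback of a $\mathbb{E}$- or $\mathbb{L}$-type Killing submersion along $\pi\circ X$ is genuinely a Killing submersion to which the classification in~\cite{Man,LerMan} applies, and that the induced metric $\widetilde X^{*}g_{\mathbb{L}}$ is conformally equivalent to the original conformal structure on $\Sigma$, so that $\widetilde X$ qualifies as a conformal immersion in the sense required by the statement.
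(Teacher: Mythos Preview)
Your proposal is correct and follows essentially the same route as the paper: pull back the Killing-submersion structure over the simply connected base $M=\Sigma$ via the classification in~\cite{Man,LerMan}, write $X$ as a global graph $F_u$, combine the divergence identities~\eqref{eqn:H} and~\eqref{eqn:tau} with Poincar\'e's lemma to obtain the twin relation $\widetilde Gv/\sqrt{1-\|\widetilde Gv\|^2}=JGu$, and read off the CMC-$\tau$ equation for the dual graph. The conformality of $\widetilde X$ that you flag as the main obstacle is exactly the point the paper also postpones, settling it in Section~\ref{sec:coordinates} by computing the first fundamental forms in the frame~\eqref{eqn:tangent-frame} and checking $\widetilde{\mathrm I}=\omega^{-2}\mathrm I$.
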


\begin{proof}
Let $X:\Sigma\to\mathbb{E}(\kappa,\tau)$ be a conformal immersion with constant mean curvature $H$. Since $X$ is nowhere vertical, the map $\pi\circ X$ is a local diffeomorphism and we can consider the Riemannian surface $M$ defined as $\Sigma$ endowed with the pullback of the metric of $\mathbb{M}^2(\kappa)$ by $\pi\circ X$. As $M$ is simply connected, there are unique Killing submersions $\pi':\mathbb{E}\to M$ and $\widetilde\pi':\mathbb{L}\to M$, where $\pi'$ is Riemannian and $\widetilde\pi'$ is lorentzian) with constant bundle curvatures $\tau$ and $H$, respectively, and such that $\mathbb{E}$ and $\mathbb{L}$ are simply connected.

Note that $\mathbb{E}$ and $\mathbb{L}$ are locally isometric to $\mathbb{E}(\kappa,\tau)$ and $\mathbb{L}(\kappa,H)$, respectively. More explicitly, since $\pi\circ X:M\to\mathbb{M}^2(\kappa)$ is a local isometry, then it lifts to local isometries $\Psi:\mathbb{E}\to\mathbb{E}(\kappa,\tau)$ and $\widetilde\Psi:\mathbb{L}\to\mathbb{L}(\kappa,H)$ such that the following diagram commutes:
\[\begin{tikzcd}
\mathbb{E}\arrow[rr,rightarrow, "\pi'"]&&
M\arrow[rr,leftarrow,"\widetilde\pi'"]&&
\mathbb{L}\arrow[d,rightarrow, "\widetilde\Psi"]\\
\mathbb{E}(\kappa,\tau)\arrow[rr, rightarrow, "\pi"]\arrow[u,leftarrow, "\Psi"]&&
\mathbb{M}^2(\kappa)\arrow[u, leftarrow, "\pi\circ X"]\arrow[rr,leftarrow,"\widetilde\pi"]&&
\mathbb{L}(\kappa,H)
\end{tikzcd}\]
Furthermore, the image of $X$ lies in the image of $\Psi$, so the immersion $\Psi^{-1}\circ X:\Sigma\to\mathbb{E}$ is well defined, and it is not only a conformal immersion in $\mathbb{E}$ with constant mean curvature $\tau$ but also an entire graph over $M$. Note that fibers of $\pi'$ have infinite length, since otherwise $\pi'$ would be the Hopf fibration, which does not admit global sections (this excludes the case $\Sigma$ is a sphere and $\tau\neq 0$, see the comments below).

Let $F_0:M\to\mathbb{E}$ be a global section of $\pi'$, which exists by~\cite[Proposition~3.3]{LerMan}. Then there is a function $u$ such that $\Psi^{-1}\circ X$ can be parameterized as $F_u:M\to\mathbb{E}$ where $F_u(p)=\phi_{u(p)}(F_0(p))$ and $\{\phi_t:t\in\mathbb{R}\}$ stands for the 1-parameter group of isometries associated to the vertical Killing vector field in $\mathbb{E}$. The fact that $F_u$ has constant mean curvature $H$ can be expressed as
\begin{equation}\label{thm:duality:eqn1}
\mathrm{div}\left(\frac{Gu}{\sqrt{1+\|Gu\|^2}}\right)=2H,
\end{equation}
where $Gu=\nabla u-Z$, and $Z=\pi'_*(\overline\nabla d)\in\mathfrak{X}(M)$ satisfies $\mathrm{div}(JZ)=-2\tau$.

Let $\widetilde\pi':\mathbb{L}\to M$ be the unique Lorentzian Killing submersion over $M$ with constant bundle curvature $H$ such that $\mathbb{L}$ is simply connected. Given an initial global section $\widetilde F_0:M\to\mathbb{L}$ and the signed distance $\widetilde d$, the vector field $\widetilde Z= \widetilde \pi'_*(\overline\nabla\widetilde d)\in\mathfrak{X}(M)$ satisfies $\mathrm{div}(J\widetilde Z)=-2H$. Hence we can rewrite~\eqref{thm:duality:eqn1} as  
\begin{equation}\label{thm:duality:eqn2}
\mathrm{div}\left(\frac{Gu}{\sqrt{1+\|Gu\|^2}}+J\widetilde Z\right)=0,
\end{equation}
The fact that $M$ is simply connected together with Poincar\'e's Lemma tell us that the term inside the divergence in~\eqref{thm:duality:eqn2} equals $-J\nabla v$ for some $v\in C^\infty(M)$. Setting $\widetilde G v=\nabla v+\widetilde Z$, we reach 
\begin{equation}\label{thm:duality:eqn3}
\frac{Gu}{\sqrt{1+\|Gu\|^2}}=-J\widetilde G v\quad\Longleftrightarrow\quad
\frac{\widetilde Gv}{\sqrt{1-\|\widetilde Gv\|^2}}=JGu
\end{equation}
Observe that, in order to check the equivalence in~\eqref{thm:duality:eqn3}, we can take squared norms in the first expression to obtain $\|\widetilde Gv\|<1$ (which implies that the entire graph $\widetilde F_v:M\to\mathbb{L}$ given by $\widetilde F_v(p)=\widetilde\phi_{v(p)}(\widetilde F_0(p))$ is spacelike, see~\eqref{eqn:spacelike}), and the identity $1+\|Gu\|^2=(1-\|\widetilde Gv\|^2)^{-1}$. From here it is easy to prove~\eqref{thm:duality:eqn3}. 

Applying now the divergence operator, we get that 
\begin{equation}\label{thm:duality:eqn5}
\mathrm{div}\left(\frac{\widetilde Gv}{\sqrt{1-\|\widetilde Gv\|^2}}\right)=\mathrm{div}(JGu)=\mathrm{div}(J\nabla u)-\mathrm{div}(JZ)=2\tau,
\end{equation}
i.e., $\widetilde F_v$ has  constant mean curvature $\tau$ in $\mathbb{L}$.

The diffeomorphism $T:F_u(M)\to\widetilde F_v(M)$ determined by the relation $\widetilde\pi'\circ T=\pi'$ is conformal (the proof of this property will be postponed to Section~\ref{sec:coordinates} since it follows from the same argument as in~\cite{Lee,LeeMan} in local coordinates). Hence, it suffices to define $\widetilde X=\widetilde\Psi\circ T\circ\Psi^{-1}\circ X$, which is conformal as the composition of conformal maps, and trivially satisfies $\pi\circ X=\widetilde\pi\circ\widetilde X$.

The way back from a constant mean curvature $\tau$ spacelike graph in $\mathbb{L}(\kappa,H)$ is a consequence of a completely analogous argument, so it will be skipped.
\end{proof}

\begin{remark}~
\begin{enumerate}
	\item The conformal immersion $X$ is a graph if and only if $\widetilde X$ is a graph, in which case the domains of both graphs in $\mathbb{M}^2(\kappa)$ coincide. This case will be analyzed in detail in Section~\ref{sec:coordinates}.
	\item The duality is determined unambiguously by~\eqref{thm:duality:eqn3} up to a vertical translation, which implies it is one-to-one up to vertical translations. The analytic reason is that, when Poincar\'e's Lemma is applied, the function we introduce is determined up to an additive constant. 
	\item In the case $\Sigma$ is a sphere, since the angle functions of the immersions are positive continuous functions, they must be bounded between two positive constants. Therefore $\pi$ restricts to the surface as a covering map, which has to be one-to one because $\Sigma$ is simply connected. We deduce that $\mathbb{M}^2(\kappa)$ must be also a sphere, i.e., $\kappa>0$. The bundle curvature must be zero, for were it not the case, the Killing submersion would be topologically the Hopf fibration, which does not admit global sections and we would get a contradiction. Moreover, integrating the divergence equation for the mean curvature we get that $H=0$, so this case only leads to horizontal slices $\mathbb{S}^2(\kappa)\times\{t_0\}$, which are dual in $\mathbb{S}^2(\kappa)\times\mathbb{R}$ and $\mathbb{S}^2(\kappa)\times\mathbb{R}_1$.
	\item In~\cite{LeeMan} a much more general graphical duality is obtained, where $H$ and $\tau$ are assumed arbitrary over an arbitrary simply connected non-compact base surface. Nonetheless, there is no direct extension of Theorem~\ref{thm:duality} to this general setting, due to the fact that the proof of Theorem~\ref{thm:duality} is implicitly assuming that the multigraph has the same mean curvature at points lying in the same vertical fiber, which is not a natural assumption in the prescribed mean curvature scenario.
\end{enumerate}
\end{remark}

\subsection{The duality in coordinates}\label{sec:coordinates}

We will apply now Theorem~\ref{thm:duality} to the case the surfaces are graphs, introducing the standard models of $\mathbb{E}(\kappa,\tau)$ and $\mathbb{L}(\kappa,\tau)$ in order to recover the explicit equations defining the duality given by Lee~\cite{Lee}. Let $\kappa\in\mathbb{R}$, and consider the function
\[\lambda_\kappa(x,y)=\left(1+\tfrac{\kappa}{4}(x^2+y^2)\right)^{-1},\]
defined on the open set $\Omega_\kappa=\{(x,y)\in\mathbb{R}^2:1+\frac{\kappa}{4}(x^2+y^2)>0\}$, which is a disk for $\kappa<0$ and the whole plane otherwise. The conformal metric $\lambda_k^2(\mathrm{d} x^2+\mathrm{d} y^2)$ has constant curvature $\kappa$, and it is (locally) isometric to $\mathbb{M}^2(\kappa)$. We get the well known rotationally invariant models 
\begin{equation}\label{eqn:models}
\begin{aligned}
\mathbb{E}(\kappa,\tau)&=\left(\Omega_\kappa\times\mathbb{R},\lambda_k^2(\mathrm{d} x^2+\mathrm{d} y^2)+(\mathrm{d} z+\tau\lambda_k(y\,\mathrm{d} x-x\,\mathrm{d} y))^2\right),\\
\mathbb{L}(\kappa,\tau)&=\left(\Omega_\kappa\times\mathbb{R},\lambda_k^2(\mathrm{d} x^2+\mathrm{d} y^2)-(\mathrm{d} z-\tau\lambda_k(y\,\mathrm{d} x-x\,\mathrm{d} y))^2\right),
\end{aligned}
\end{equation}
on which the Killing submersion takes the form $(x,y,z)\mapsto (x,y)$, and $\partial_z$ is the unit vertical Killing vector field. It is necessary to point out that this model omits a point in the base $\mathbb{M}^2(\kappa)$ when $\kappa>0$, and hence a whole fiber in $\mathbb{E}(\kappa,\tau)$ or $\mathbb{L}(\kappa,\tau)$. In the proof of Theorem~\ref{thm:duality} this issue did not appear since we were working from a coordinate-free point of view.

The global section $F_0(x,y)=(x,y,0)$ allows us to parameterize the graph $F_u$ of a function $u\in C^\infty(\Omega)$ on an open subset $\Omega\subset\Omega_\kappa$ as usual:
\[F_u(x,y)=(x,y,u(x,y)).\]
If $\Omega$ is open and simply connected, and we consider dual graphs $F_u$ in $\mathbb{E}(\kappa,\tau)$ and $F_v$ in $\mathbb{L}(\kappa,H)$, the generalized gradients $Gu$ of $F_u$ and $\widetilde Gv$ of $F_v$ can be expressed in coordinates as
\begin{align*}
Gu&=\alpha\frac{\partial_x}{\lambda_\kappa}+\beta\frac{\partial_y}{\lambda_\kappa},\quad\text{where }
\begin{cases}\alpha=\frac{u_x}{\lambda_\kappa}+\tau y,\\\beta=\frac{u_y}{\lambda_\kappa}-\tau x,\end{cases}\\
\widetilde Gv&=\widetilde\alpha\frac{\partial_x}{\lambda_\kappa}+\widetilde\beta\frac{\partial_y}{\lambda_\kappa},\quad\text{where }
\begin{cases}\widetilde\alpha=\frac{v_x}{\lambda_\kappa}-\tau y,\\\widetilde\beta=\frac{v_y}{\lambda_\kappa}+\tau x.\end{cases}
\end{align*}
In order to simplify the notation, in the sequel we shall also consider 
\begin{align*}
\omega&=\sqrt{1+\alpha^2+\beta^2},&\widetilde\omega&=\sqrt{1-\widetilde\alpha^2-\widetilde\beta^2}.
\end{align*}
Then the first identity in~\eqref{thm:duality:eqn3} yields an explicit \textsc{pde} system which allows us to solve for $u$ in terms of $v$:
\begin{align*}
 \alpha=\frac{\widetilde\beta}{\widetilde\omega}\quad\Leftrightarrow\quad u_x&=\frac{v_y+Hx\lambda_k}{\sqrt{1+(\frac{v_x}{\lambda_k}-H y)^2+(\frac{v_y}{\lambda_k}+H x)^2}}-\tau y\lambda_k,\\
 \beta=\frac{-\widetilde\alpha}{\widetilde\omega}\quad\Leftrightarrow\quad u_y&=\frac{-v_x+Hy\lambda_k}{\sqrt{1+(\frac{v_x}{\lambda_k}-H y)^2+(\frac{v_y}{\lambda_k}+H x)^2}}+\tau x\lambda_k.
 \end{align*}
 Likewise, the second identity in~\eqref{thm:duality:eqn3} gives the \textsc{pde} system:
\begin{align*}
 \widetilde\alpha=\frac{-\beta}{\omega}\quad\Leftrightarrow\quad v_x&=\frac{-u_y+\tau x\lambda_k}{\sqrt{1+(\frac{u_x}{\lambda_k}+\tau y)^2+(\frac{u_y}{\lambda_k}-\tau x)^2}}+Hy\lambda_k,\\
 \widetilde\beta=\frac{\alpha}{\omega}\quad\Leftrightarrow\quad v_y&=\frac{u_x+\tau y\lambda_k}{\sqrt{1+(\frac{u_x}{\lambda_k}+\tau y)^2+(\frac{u_y}{\lambda_k}-\tau x)^2}}-Hx\lambda_k.
 \end{align*}
These four equations are the so-called \emph{twin relations}, and allow us to solve for $u$ (resp. $v$) when $v$ (resp. $u$) is known. The constant mean curvature equations for $F_u$ and $F_v$ can be regarded as the integrability conditions for the twin relations. Also from the twin relations we get the quite useful formula
\begin{equation}\label{eqn:omega}
\omega\cdot\widetilde\omega=1.
\end{equation}

On the other hand, we can define global references $\{e_1,e_2\}$ tangent to $F_u$ and $\{\widetilde e_1,\widetilde e_2\}$ tangent to $F_v$ (not necessarily orthogonal) as
\begin{equation}\label{eqn:tangent-frame}
\begin{aligned}
 e_1&=\frac{\partial_x+u_x\partial_z}{\lambda_\kappa},&\widetilde e_1&=\frac{\partial_y+v_y\partial_z}{\lambda_\kappa},\\
 e_2&=\frac{\partial_x+v_x\partial_z}{\lambda_\kappa},&\widetilde e_2&=\frac{\partial_y+v_y\partial_z}{\lambda_\kappa}.
\end{aligned}\end{equation}
Using~\eqref{eqn:models} it is easy to check that the first fundamental forms $I$ of $F_u$ and $\widetilde I$ of $F_v$ can be expressed in these references as the matrices
\begin{equation}\label{eqn:1ff}
\begin{aligned}
 \mathrm{I}&\equiv\left(\begin{matrix}
 1+\alpha^2&\alpha\beta\\\alpha\beta&1+\beta^2
 \end{matrix}\right),&
 \widetilde{\mathrm{I}}&\equiv\left(\begin{matrix}
 1-\widetilde\alpha^2&-\widetilde\alpha\widetilde\beta\\-\widetilde\alpha\widetilde\beta&1-\widetilde\beta^2
 \end{matrix}\right).
\end{aligned}\end{equation}
From the twin relations we infer that the two matrices in Equation~\eqref{eqn:1ff} satisfy $\widetilde{\mathrm{I}}=\omega^{-2}\mathrm{I}$. Since the global diffeomorphism $T:F_u(\Omega)\to F_v(\Omega)$ given in the proof of Theorem~\ref{thm:duality} reads $T(x,y,u(x,y))=(x,y,v(x,y))$ for all $(x,y)\in\Omega$, it follows that $T_*e_i=\widetilde e_i$ for $i\in\{1,2\}$, so $T$ is conformal (this was was the missing bit in the proof of Theorem~\ref{thm:duality}).

Let $N$ and $\widetilde N$ be upward pointing unit vectors to $F_u$ and $F_v$, respectively. Their horizontal parts satisfy the following identities (see~\cite[Equation~3.3]{LerMan}):
\begin{equation}\label{eqn:N}
\begin{aligned}
\pi_*N&=\frac{Gu}{\sqrt{1+|Gu|^2}},&
\widetilde\pi_*\widetilde N=\frac{\widetilde Gv}{\sqrt{1-|\widetilde Gv|^2}}.
\end{aligned}
\end{equation}
Since $N$ and $\widetilde N$ are unitary, their vertical components (also called \emph{angle functions} of the immersions $X$ and $\widetilde X$, respectively), are given by
\begin{equation}\label{eqn:angle}
\begin{aligned}
 \nu=\langle N,\partial_z\rangle&=\frac{1}{\sqrt{1+|Gu|^2}}=\frac{1}{\omega},\\
 \widetilde\nu=\langle \widetilde N,\partial_z\rangle&=\frac{1}{\sqrt{1-|\widetilde Gv|^2}}=\frac{1}{\widetilde\omega}.
\end{aligned}
\end{equation}
From~\eqref{eqn:omega} and~\eqref{eqn:angle}, we deduce that dual conformal immersions have reciprocal angle functions. Note that the angle function $\nu$ (resp. $\widetilde\nu$) is the cosine (resp. hyperbolic cosine) of the angle between the upward-pointing normal and the Killing vector field, so it satisfies $0<\nu\leq 1$ (resp. $1\leq\widetilde\nu<\infty$).

As a conclusion to this section, we will give several examples, though we will not include the computations, which should be easy to deduce from the twin relations. Further examples can be found in~\cite{KL,Lee}.

\begin{example}\label{example1}
Let us consider the function $v(x,y)=0$, which defines a zero mean curvature graph in $\mathbb{L}(\kappa,H)$ but fails to be spacelike over the whole base surface $\mathbb{M}^2(\kappa)$ when $\kappa+4 H^2>0$. The spacelike condition only holds in a disk centered at $(0,0)\in\mathbb{M}^2(\kappa)$. The dual surface $F_u$ is half of the rotationally invariant sphere with constant mean curvature $H$ in $\mathbb{E}(\kappa,\tau)$.

The value of $H$ (if it exists) such that $\kappa+4 H^2=0$ is called the critical mean curvature in $\mathbb{E}(\kappa,\tau)$. Note that constant mean curvature spheres only exist when the mean curvature is supercritical (i.e., $\kappa+4 H^2>0$) and the duality provides a nice way to obtain explicit expressions for them.
\end{example}

\begin{example}\label{example2}
Let us consider a constant mean curvature $H$ graph in $\mathbb{R}^3$ which is rotationally invariant about the $x$-axis, i.e, a graph of a function $u$ of the form $u(x,y)=\sqrt{r(x)^2-y^2}$ for some $r\in C^\infty(I)$ on an interval $I\subseteq\mathbb{R}$ (a piece of a Delaunay surface). The dual maximal surface $F_v$ in $\mathrm{Nil}_3(H)$ is of the form $v(x,y)=yf(x)$ for some function $f\in C^\infty(I)$ (in particular, $F_v$ is a surface ruled by Euclidean lines). Quite surprisingly, as in the above case of spheres, $v(x,y)$ still defines a graph with zero mean curvature outside the domain of $u$.
\end{example}

\begin{example}\label{example2}
Let us consider now a constant mean curvature $H$ spacelike graph in $\mathbb{L}^3$ which is invariant under hyperbolic rotations about the $x$-axis, i.e, a graph of a function $v$ of the form $v(x,y)=\sqrt{r(x)^2+y^2}$ for some $r\in C^\infty(I)$ on an interval $I\subseteq\mathbb{R}$. These examples were studied by Daniel in~\cite{Dan2}, and include the surfaces parameterized by
\begin{align*}
 (x,y)&\mapsto \left(x,y,\sqrt{H^{-2}+x^2+y^2}\right)\\
 (x,y)&\mapsto \left(x,y,\sqrt{(2H)^{-2}+y^2}\right)\\
 (x,y)&\mapsto \tfrac{1}{H}\left(x-\tfrac{1}{2}\coth(x),\tfrac{1}{2}\coth(x)\sinh(y),\tfrac{1}{2}\coth(x)\cosh(y)\right)
\end{align*}
The first one is a totally umbilical paraboloid, the second one is a hyperbolic cylinder, and the last one is known as the semithrough. The dual minimal surfaces in $\mathrm{Nil}_3(H)$ are of the form $u(x,y)=yf(x)$ for some function $f\in C^\infty(I)$, including  the plane $u(x,y)=0$ and the invariant surface $u(x,y)=Hxy$.
\end{example}

\section{Existence of complete spacelike surfaces}

In this section, we will discuss how the conformal theories of constant mean curvature multigraphs in $\mathbb{E}(\kappa,\tau)$ and $\mathbb{L}(\kappa,\tau)$ agree via the duality. We will begin by studying the relation between being complete and being an entire graph, essentially following the ideas in~\cite{LeeMan}.

\begin{lemma}\label{lemma:nonexistence}
Let $\pi:\mathbb{L}\to M$ be a Lorentzian Killing submersion and let $\Sigma$ be a complete spacelike surface immersed in $\mathbb{L}$. If $M$ is simply connected, then $\Sigma$ is an entire graph.
\end{lemma}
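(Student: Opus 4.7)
The plan is to argue that $\pi\vert_\Sigma:\Sigma\to M$ is a covering map, after which the simple connectedness of $M$ forces it to be a diffeomorphism, which is exactly the statement that $\Sigma$ is an entire graph.

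First I would observe that $\pi\vert_\Sigma$ is a local diffeomorphism. Since $\xi$ is timelike and $\Sigma$ is spacelike, no tangent vector to $\Sigma$ can be proportional to $\xi$, so $\Sigma$ is everywhere transversal to the Killing direction and $\pi\vert_\Sigma$ has surjective differential at every point. Hence $\pi\vert_\Sigma$ is already a vertical multigraph in the sense described after Theorem~\ref{thm:duality}; what remains is to upgrade local bijectivity to global bijectivity.

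The key step is a metric comparison. Let $e\in T_p\Sigma$ be any tangent vector and decompose it along the $\xi$--direction as $e = e^{h} - \langle e,\xi\rangle\,\xi$, where $e^{h}$ is horizontal and $\langle\xi,\xi\rangle=-1$. Because $\Sigma$ is spacelike and $\pi$ is a Killing submersion (so $\pi_*$ is a linear isometry on horizontal vectors), one computes
\begin{equation*}
|\pi_* e|_M^2 \;=\; \langle e^h,e^h\rangle \;=\; \langle e,e\rangle + \langle e,\xi\rangle^2 \;\geq\; \langle e,e\rangle \;=\; |e|_\Sigma^2.
\end{equation*}
Thus the pullback metric $(\pi\vert_\Sigma)^{*}g_M$ dominates the induced metric on $\Sigma$ pointwise. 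Consequently the induced distance is dominated by the pullback distance: every curve in $\Sigma$ has length in the pullback metric at least equal to its length in the induced metric. Combined with the fact that $\pi\vert_\Sigma$ is a local diffeomorphism (so both metrics induce the same topology), this implies that if $\Sigma$ is complete with respect to its induced metric, then $(\Sigma,(\pi\vert_\Sigma)^{*}g_M)$ is also metrically complete, hence geodesically complete by Hopf--Rinow.

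Finally, $\pi\vert_\Sigma$ is a local isometry from the complete Riemannian manifold $(\Sigma,(\pi\vert_\Sigma)^{*}g_M)$ into $M$, and a standard fact (local isometries from complete Riemannian manifolds are covering maps) shows that $\pi\vert_\Sigma$ is a covering. Since $M$ is simply connected, the covering is trivial, so $\pi\vert_\Sigma:\Sigma\to M$ is a diffeomorphism, i.e., $\Sigma$ is an entire graph. The main obstacle is the inequality $|\pi_* e|_M \geq |e|_\Sigma$ together with the transfer of completeness from the induced to the pullback metric; everything else is standard covering theory.
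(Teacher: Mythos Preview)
Your proof is correct and follows essentially the same route as the paper: both show that $\pi\vert_\Sigma$ is a distance non-decreasing local diffeomorphism (your inequality $|\pi_* e|_M\geq |e|_\Sigma$ is exactly this statement at the infinitesimal level), deduce from completeness that it is a covering map, and conclude by simple connectedness of $M$. The paper's version is terser, invoking \cite[Lemma~8.1, Ch.~VIII]{KN} directly for the covering step, while you unpack this by passing to the pullback metric and using the standard fact about local isometries from complete manifolds; the underlying argument is the same.
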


\begin{proof}
The projection $\pi_{|\Sigma}:\Sigma\to M$ is a distance non-decreasing local diffeomorphism, and hence a covering map by~\cite[Lemma~8.1, Ch.~VIII]{KN}. If $M$ is simply connected, then $\pi_{|\Sigma}$ must be one-to-one, i.e., $\Sigma$ must be an entire graph.
\end{proof}

\begin{remark}\label{rmk:completeness}
The converse is not true, since there are non-complete entire spacelike graphs in $\mathbb{L}(\kappa,\tau)$ with any constant mean curvature $H\geq 0$ provided that $\kappa+4\tau^2<0$. Such graphs can be found among the surfaces invariant under a 1-parameter group of isometries, as shown in the case $(\kappa,\tau,H)=(-1,0,0)$ by Albujer~\cite{A} (the same technique works in the general case). 
\end{remark}

This gives an idea of the fact that the behavior of spacelike constant mean curvature surfaces in $\mathbb{L}(\kappa,\tau)$ strongly depends on the sign of $\kappa+4\tau^2$, in a similar fashion as it depends on $\kappa+4H^2$ in the Riemannian case. Next result shows that the theory of complete spacelike surfaces is not interesting when the bundle curvature is supercritical. Note that there is no assumption on the mean curvature of the surface. We will give three different proofs of this result.

\begin{theorem}
If $\kappa+4\tau^2>0$ and $\kappa\leq 0$, then there are no complete spacelike surfaces or entire spacelike graphs in $\mathbb{L}(\kappa,\tau)$.
\end{theorem}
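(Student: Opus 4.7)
The plan is to combine Lemma~\ref{lemma:nonexistence} with a divergence-theorem estimate based on the identity $\mathrm{div}(J\widetilde Z)=-2\tau$ from~\eqref{eqn:tau}. Since $\kappa\leq 0$ the base $\mathbb{M}^2(\kappa)$ is simply connected and non-compact, so Lemma~\ref{lemma:nonexistence} reduces matters to showing that no entire spacelike graph exists in $\mathbb{L}(\kappa,\tau)$ under the stated hypotheses.

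Given such an entire graph, I would fix a global section $\widetilde F_0:\mathbb{M}^2(\kappa)\to\mathbb{L}(\kappa,\tau)$ (guaranteed by \cite[Proposition~3.3]{LerMan} because the fibers have infinite length) and write the graph as $\widetilde F_v$ for some $v\in C^\infty(\mathbb{M}^2(\kappa))$. The spacelike condition~\eqref{eqn:spacelike} reads $\|\widetilde Gv\|<1$ everywhere, where $\widetilde Gv=\nabla v+\widetilde Z$. Since $\mathrm{div}(J\nabla v)=0$ on any surface, combining this with~\eqref{eqn:tau} yields the crucial divergence identity
\begin{equation*}
\mathrm{div}(J\widetilde Gv)=-2\tau,
\end{equation*}
which holds without any assumption on the mean curvature of the graph. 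Integrating over the geodesic disk $D_r\subset\mathbb{M}^2(\kappa)$ of radius $r$ centered at an arbitrary point, the divergence theorem together with the pointwise bound $\|J\widetilde Gv\|=\|\widetilde Gv\|<1$ gives
\begin{equation*}
2|\tau|\,A(r)=\left|\int_{\partial D_r}\langle J\widetilde Gv,\eta\rangle\,d\sigma\right|<L(r),
\end{equation*}
where $A(r)$ and $L(r)$ denote the area of $D_r$ and the length of its boundary in $\mathbb{M}^2(\kappa)$, respectively.

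The final step is to obtain a contradiction from the asymptotic isoperimetric profile of $\mathbb{M}^2(\kappa)$. For $\kappa=0$ one has $L(r)/A(r)=2/r\to 0$, so the inequality forces $|\tau|<1/r$ for every $r>0$, hence $\tau=0$ and $\kappa+4\tau^2=0$, contradicting the hypothesis. For $\kappa<0$ the standard formulas for $\mathbb{H}^2(\kappa)$ give $L(r)/A(r)=\sqrt{-\kappa}\coth(\tfrac{1}{2}\sqrt{-\kappa}\,r)$, which decreases to $\sqrt{-\kappa}$ as $r\to\infty$; passing to the limit in the inequality above yields $2|\tau|\leq\sqrt{-\kappa}$, i.e.~$\kappa+4\tau^2\leq 0$, again contradicting the hypothesis.

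I do not anticipate serious technical obstacles. The slightly delicate point is the observation that the divergence identity involves only the quantity $\widetilde Z$ intrinsic to the submersion, so it is available globally with no prescription on the second-order behavior of $v$; the rest is an honest Euclidean/hyperbolic isoperimetric estimate, and the threshold $\kappa+4\tau^2>0$ is exactly what makes the linear growth of the boundary flux of $J\widetilde Gv$ incompatible with the growth of the area of geodesic disks in $\mathbb{M}^2(\kappa)$.
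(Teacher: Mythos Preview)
Your argument is correct and coincides with the paper's own \emph{second proof}: the paper also fixes a global section, writes the entire graph as $F_v$, uses $\mathrm{div}(J\widetilde Gv)=-2\tau$ together with $\|\widetilde Gv\|<1$, and applies the divergence theorem to get $2|\tau|\,\mathrm{Area}(\Omega)\leq\mathrm{Length}(\partial\Omega)$, concluding via the Cheeger constant of $\mathbb{M}^2(\kappa)$. The only cosmetic difference is that you specialize to geodesic disks and compute the ratio $L(r)/A(r)$ explicitly, whereas the paper phrases the last step in terms of the Cheeger constant; the paper also offers two alternative proofs (one via the duality and Heinz's trick, one via causality), but your route matches one of theirs essentially verbatim.
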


\smallskip\noindent{\it First proof.}\hskip \labelsep
This proof relies on the duality, as well as on a classical trick due to Heinz~\cite{Heinz}. In order to apply Theorem~\ref{thm:duality}, we will assume the mean curvature is constant, though the same argument applies in the general case, see~\cite{LeeMan}. We will assume that $\Sigma\subset\mathbb{L}(\kappa,\tau)$ is an entire graph with constant mean curvature $H$, and reach a contradiction, which will prove the statement in view of Lemma~\ref{lemma:nonexistence}.

The dual graph $F_u$ in $\mathbb{E}(\kappa,H)$ has constant mean curvature $\tau$. Given a bounded domain $\Omega\subset\mathbb{M}^2(\kappa)$ with regular boundary, we can employ the mean curvature equation and the divergence theorem to estimate
\begin{equation}\label{eqn:heinz}
2\tau\mathrm{Area}(\Omega)=\int_\Omega\mathrm{div}\left(\frac{Gu}{\sqrt{1+\|Gu\|^2}}\right)=\int_{\partial\Omega}\frac{\langle Gu,\eta\rangle}{\sqrt{1+\|Gu\|^2}}\leq\mathrm{Length}(\partial\Omega),
\end{equation}
where $\eta$ is a unit outer conormal vector field to $\Omega$ along its boundary, and we have used Cauchy-Schwartz inequality. Taking the infimum in~\eqref{eqn:heinz} over $\Omega$ we get that
\begin{equation}\label{eqn:cheeger}
2\tau\leq\inf\left\{\frac{\mathrm{Length}(\partial\Omega)}{\mathrm{Area}(\Omega)}:\Omega\subset\mathbb{M}^2(\kappa) \text{ bounded and regular}\right\}.
\end{equation}
The \textsc{lhs} in~\eqref{eqn:cheeger} is the so-called Cheeger constant of $\mathbb{M}^2(\kappa)$, which is known to be zero if $\kappa\geq 0$, and $\frac{1}{2}\sqrt{-\kappa}$ if $\kappa\leq 0$. Therefore, Equation~\eqref{eqn:cheeger} yields $\kappa+4\tau^2\leq 0$ if $\kappa\leq 0$, so we are done.
\hfill\penalty10000\raisebox{-.09em}{$\Box$}\par\medskip

\smallskip\noindent{\it Second proof.}\hskip \labelsep
Now we will give a direct argument using the divergence equation for $\tau$. If $\Sigma$ is an entire spacelike graph in $\mathbb{L}(\kappa,\tau)$, parameterized as $F_v$ for some $v\in C^\infty(\mathbb{M}^2(\kappa))$, then the vertical fibers have infinite length and the generalized gradient satisfies $\mathrm{div}(JGv)=\mathrm{div}(J\nabla v+JZ)=\mathrm{div}(JZ)=-2\tau$, and the spacelike property reads $\|Gv\|\leq 1$. If $\Omega\subset\mathbb{M}^2(\kappa)$ is a bounded domain with regular boundary, we can estimate
\[2\tau\mathrm{Area}(\Omega)=\int_\Omega\mathrm{div}(-Gv)=\int_{\partial\Omega}\langle -Gv,\eta\rangle\leq\mathrm{Length}(\partial\Omega),\]
where we have used the Cauchy-Schwartz inequality, and $\eta$ denotes a unit outer conormal vector field to $\Omega$ along its boundary. Then we can conclude with a similar argument as in the first proof.
\hfill\penalty10000\raisebox{-.09em}{$\Box$}\par\medskip

\smallskip\noindent{\it Third proof.}\hskip \labelsep
Let us consider the circle in the $z=0$ plane given by
\[\gamma:\mathbb{R}\to\mathbb{L}(\kappa,\tau),\qquad \gamma(t)=(r\cos(t),r\sin(t),0).\]
From the assumption $\kappa+4\tau^2>0$, it is easy to check that $\gamma$ is a closed timelike curve for some values of $r\in(0,\frac{2}{\sqrt{-\kappa}})$ if $\kappa<0$, or $r\in(0,\infty)$ if $\kappa=0$. This means that $\mathbb{L}(\kappa,\tau)$ is not a causal spacetime, and in particular it is not distinguishing, so it does not admit complete spacelike surfaces (more information about these definitions and the fact that distinguishing spacetimes admit a Killing submersion structure and complete spacelike surfaces can be found in~\cite{JS}).
\hfill\penalty10000\raisebox{-.09em}{$\Box$}\par\medskip

In the rest of this section, we will substitute completeness with a weaker condition in order to apply Riemannian results. However we will require that the mean curvature is constant.

\begin{definition}
A spacelike conformal immersion $\widetilde X:\Sigma\to\mathbb{L}(\kappa,\tau)$ with constant mean curvature $H$ is said to be dual-complete when the dual immersion $X:\Sigma\to\mathbb{E}(\kappa,H)$ is complete.
\end{definition}

Denote by $g$ and $\widetilde g$ the Riemannian metrics in $\Sigma$ that make $X$ and $\widetilde X$ isometric, respectively. Then Equations~\eqref{eqn:1ff} and~\eqref{eqn:angle} yields the conformal relation $g=\widetilde\nu^2\widetilde g$, where $\widetilde\nu$ stands for the angle function of $\widetilde X$. Since $\widetilde\nu\geq 1$, we deduce that any complete spacelike immersion is dual-complete.

\begin{theorem}\label{thm:dual-complete}
Let $\widetilde X:\Sigma\to\mathbb{L}(\kappa,\tau)$ be a dual-complete spacelike conformal immersion with constant mean curvature. Then either $\widetilde X(\Sigma)$ is a horizontal slice in $\mathbb{S}^2(\kappa)\times\mathbb{R}_1$ or $\kappa+4\tau^2\leq 0$.
\begin{enumerate}
 \item If $\kappa+4\tau^2<0$, then $\widetilde X(\Sigma)$ is graph over a simply connected domain bounded by disjoint curves with constant curvature $\pm 2\tau$ in $\mathbb{H}^2(\kappa)$.
 \item If $\kappa+4\tau^2=0$, then $\widetilde X(\Sigma)$ is complete, and hence an entire graph. 
\end{enumerate}
In particular, $\Sigma$ is simply connected.
\end{theorem}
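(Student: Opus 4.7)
The plan is to translate the hypothesis to the Riemannian side via Theorem~\ref{thm:duality}: the dual immersion $X:\Sigma\to\mathbb{E}(\kappa,H)$ has constant mean curvature $\tau$ and, by dual-completeness, the induced metric $g=\widetilde\nu^2\widetilde g$ on $\Sigma$ is complete. Moreover $X$ is a multigraph, since $\widetilde X$ is spacelike and hence transversal to the Killing direction, so the same holds for the dual. The three alternatives in the statement correspond exactly to the sign of $\kappa+4\tau^2$, which is the critical discriminant for CMC-$\tau$ surfaces in the Riemannian ambient $\mathbb{E}(\kappa,H)$.

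In the supercritical range $\kappa+4\tau^2>0$, I invoke the classification of complete CMC surfaces with supercritical mean curvature (due to Abresch-Rosenberg in the product cases and Meeks-Mira-P\'erez-Ros more generally): $X(\Sigma)$ must be a rotationally invariant topological sphere. Then the third item of the remark after Theorem~\ref{thm:duality} applies: $\Sigma\cong\mathbb{S}^2$ forces $\kappa>0$, $H=0$ and $\tau=0$, and $\widetilde X(\Sigma)$ is a horizontal slice in $\mathbb{S}^2(\kappa)\times\mathbb{R}_1$. For $\kappa+4\tau^2<0$, necessarily $\kappa<0$ and the base is $\mathbb{H}^2(\kappa)$. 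I appeal to the classification of complete subcritical CMC multigraphs in $\mathbb{E}(\kappa,H)$ over $\mathbb{H}^2(\kappa)$ from~\cite{DHM,MR}: the flux formula, combined with barrier arguments, forces $X(\Sigma)$ to be a genuine vertical graph whose projection is a simply connected domain with boundary consisting of disjoint curves of constant geodesic curvature $\pm 2\tau$ (the maximal value permitted by flux in the subcritical regime). Since $X$ is a graph iff $\widetilde X$ is, and the two project to the same domain in $\mathbb{M}^2(\kappa)$ by item~1 of the same remark, case~(1) is established.

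The critical case $\kappa+4\tau^2=0$ is the main obstacle. The goal is to prove completeness of $\widetilde g$, so that Lemma~\ref{lemma:nonexistence} promotes $\widetilde X$ to an entire graph. Using $\omega\widetilde\omega=1$ from~\eqref{eqn:omega} together with~\eqref{eqn:angle}, the identity $\widetilde\nu=1/\nu$ holds, so it suffices to bound the angle function $\nu$ of the dual critical CMC surface $X$ away from zero. This uniform lower bound is not implied by completeness alone: it is a structural property of complete critical CMC surfaces, extracted from the theory of their harmonic Gauss map and the boundedness of their Abresch-Rosenberg differential (as developed in~\cite{DHM} and related references). Once such a bound is available, the conformal factor $\widetilde\nu^{-2}=g/\widetilde g$ is bounded below by a positive constant, completeness of $g$ transfers to $\widetilde g$, and case~(2) follows. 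Simple connectivity of $\Sigma$ then holds in all scenarios: either $\Sigma$ is a sphere, an entire graph over the simply connected $\mathbb{M}^2(\kappa)$, or a graph over a simply connected planar domain.
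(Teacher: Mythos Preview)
Your overall strategy of passing to the Riemannian dual is the same as the paper's, but two of your three cases contain genuine gaps.

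In the supercritical case $\kappa+4\tau^2>0$, the result you invoke does not exist: Abresch--Rosenberg and Meeks--Mira--P\'erez--Ros classify constant mean curvature \emph{spheres}, they do not assert that every complete surface with supercritical constant mean curvature is a sphere. Indeed there are plenty of non-spherical complete examples (Delaunay-type cylinders, vertical cylinders, etc.). What is actually needed here is the observation that a nowhere-vertical surface has a Jacobi function of constant sign (the angle function $\nu$), hence is \emph{stable}; then one appeals to the nonexistence of complete stable surfaces with supercritical constant mean curvature, as in~\cite{MPR}. This is exactly what the paper does, and your multigraph hypothesis already sets it up, but you never use stability.

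In the critical case $\kappa+4\tau^2=0$, your key claim that $\nu$ is bounded away from zero is false. For instance, the entire minimal graph $u(x,y)=\tau xy$ in $\mathrm{Nil}_3(\tau)$ has $\alpha=2\tau y$, $\beta=0$, so $\nu=(1+4\tau^2y^2)^{-1/2}\to 0$ as $|y|\to\infty$. Thus no uniform lower bound on $\nu$ is available, and the comparison $\widetilde g=\nu^2 g$ cannot by itself transfer completeness from $g$ to $\widetilde g$. The paper proceeds differently: it first quotes the theorem of Hauswirth--Rosenberg--Spruck and Daniel--Hauswirth (extended via Daniel correspondence) that a complete critical-CMC multigraph is an \emph{entire} graph, and then cites~\cite[Theorem~4.6.2]{DHM} for the completeness of the dual spacelike graph---a Cheng--Yau type argument on the Lorentzian side, not a pointwise bound on the angle function. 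Your subcritical case~(1) is handled essentially as in the paper.
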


\begin{proof}
Let us consider the dual graph $X:\Sigma\to\mathbb{E}(\kappa,H)$, where $H$ is the mean curvature of $\widetilde X$. Since $X$ defines a complete surface with mean curvature $\tau$, and it is stable since it is nowhere vertical, the arguments in the proof of~\cite[Theorem~3.1]{MPR} show that $X$ is either a horizontal slice in $\mathbb{S}^2\times\mathbb{R}$ or $\kappa+4\tau^2\leq 0$.

If $\kappa+4\tau^2=0$, then the $X$ has critical mean curvature and hence $X(\Sigma)$ is an entire graph (this was proved by Hauswirth, Rosenberg and Spruck~\cite{HRS} for mean curvature $\frac{1}{2}$ surfaces in $\mathbb{H}^2\times\mathbb{R}$, by Daniel and Hauswirth~\cite{DH} for minimal ones in the Heisenberg space, and it extends to the rest of cases by means of the Daniel correspondence, see~\cite{DHM}). The proof that it is complete can be found in~\cite[Theorem~4.6.2]{DHM}. If $\kappa+4\tau^2<0$, then $X$ has subcritical constant mean curvature and it must be a graph over a domain $\Omega\subset\mathbb{H}^2(\kappa)$ bounded by a (possibly empty) family of disjoint curves with constant curvature $\pm 2\tau$ (the sign depends on whether the function defining the graph goes to $+\infty$ or $-\infty$), see~\cite{MR}.
\end{proof}

Observe that this result is sharp, in the sense that there are many entire graphs with any constant mean curvature when $\kappa+4\tau^2=0$. If $\kappa=\tau=H=0$, then we only get affine planes in $\mathbb{L}^3$ due to the well-known Calabi-Bernstein theorem~\cite{Calabi}, but in the rest of cases the results of Wan and Au~\cite{Wan,WanAu} can be used to show that the moduli space of entire graphs with constant mean curvature $H$ (up to ambient isometries) is in one-to-one correspondence with the space of holomorphic quadratic differentials in the complex plane (parabolic case) or the unit disk (hyperbolic case), with the exception that the differential must be non-zero in the parabolic case. This correspondence is given by the Hopf differential, and was the key step in Fern\'{a}ndez and Mira's solution to the Bernstein problem in $\mathrm{Nil}_3(\tau)$~\cite{FM}.

In the case $\kappa+4\tau^2<0$, the family of weak-complete graphs is richer. On the one hand, there are plenty of entire graphs with constant mean curvature (for instance, Nelli and Rosenberg~\cite{NR} constructed entire minimal graphs in $\mathbb{H}^2\times\mathbb{R}$ with arbitrary continuous asymptotic values in the ideal boundary, whose duals are entire spacelike maximal graphs in $\mathbb{H}^2\times\mathbb{R}_1$), though completeness is not waranteed, see Remark~\ref{rmk:completeness}. On the other hand, ideal Scherk graphs with constant mean curvature $0\leq H<\frac{1}{2}$ in $\mathbb{H}^2\times\mathbb{R}$ and $\widetilde{\mathrm{Sl}}_2(\mathbb{R})$ constructed as solutions to certain Jenkins-Serrin problems produce dual graph exhibiting the features in item 1 of Theorem~\ref{thm:dual-complete}. These ideal Scherk graphs are conformally the plane, see~\cite{MN}.

\begin{remark}\label{rmk:daniel}
It is also worth mentioning here relation with Daniel's isometric correspondence for surfaces in $\mathbb{E}(\kappa,\tau)$-spaces~\cite{Dan}. Given $\kappa_1,\tau_1,H_1\in\mathbb{R}$, consider new parameters $\kappa_2,\tau_2,H_2\in\mathbb{R}$ such that
\[\kappa_1-4\tau_1^2=\kappa_2-4\tau_2^2,\qquad \tau_1^2+H_1^2=\tau_2^2+H_2^2.\]
Given a simply connected Riemannian surface $\Sigma$, there is an correspondence between isometric immersions of $\Sigma$ with constant mean curvature $H_1$ in $\mathbb{E}(\kappa_1,\tau_1)$ and isometric immersions of $\Sigma$ with constant mean curvature $H_2$ in $\mathbb{E}(\kappa_2,\tau_2)$.

Since Daniel correspondence preserves the angle function, it also preserves locally the graphical condition (not globally in general), so we can connect both correspondences and get that there is a correspondence between four families of conformal immersions of a simply-connected Riemann surface $\Sigma$:
\begin{enumerate}
 \item[(a)] Conformal immersions $X:\Sigma\to\mathbb{E}(\kappa_1,\tau_1)$ with constant mean curvature $H_1$ and nowhere vertical.
 \item[(b)] Conformal immersions $X':\Sigma\to\mathbb{E}(\kappa_2,\tau_2)$ with constant mean curvature $H_2$ and nowhere vertical.
 \item[(c)] Conformal spacelime immersions $\widetilde X:\Sigma\to\mathbb{L}(\kappa_1,H_1)$ with constant mean curvature $H_1$.
 \item[(d)] Conformal spacelime immersions $\widetilde X':\Sigma\to\mathbb{L}(\kappa_2,H_2)$ with constant mean curvature $H_2$.
\end{enumerate} 
If we call $g$ the pullback metric by $X$ in $\Sigma$, then $X'$ also induces the same metric on $\Sigma$, and both $\widetilde X$ and $\widetilde X'$ induce the metric $\nu^2 g$ on $\Sigma$, so the correspondence between the above items (c) and (d) is also isometric, giving a Lorentzian analogue of Daniel correspondence, see also Palmer's approach~\cite{Palmer}.
\end{remark}

The moral of Remark~\ref{rmk:daniel} is that the conformal theory of constant mean curvature surfaces in the cases (a), (b), (c) and (d) is essentially the same, and yields a beautiful correspondence for holomorphic quadratic differentials and harmonic maps in quite different geometric contexts, e.g., the following harmonic functions are related via this 4-sided correspondence (up to conformal diffeomorphisms):
\begin{itemize}
	\item The classical Gauss map for surfaces with mean curvature $\frac{1}{2}$ in $\mathbb{L}^3$, which takes values in the hyperbolic plane in a natural way.
	\item The classical hyperbolic Gauss map for maximal surfaces in the anti-de Sitter space $\mathbb{H}^3_1(-\frac{1}{2})$, see~\cite{LH,SLee} and the references therein.
	\item The left-invariant Gauss map of minimal surfaces in $\mathrm{Nil}_3(\frac{1}{2})$, see~\cite{Dan2}.
	\item The hyperbolic Gauss map of mean curvature $\frac{1}{2}$ surfaces in $\mathbb{H}^2\times\mathbb{R}$, see~\cite{FM}.
\end{itemize}
More information about these Gauss maps can be found in~\cite{DFM}. On the other hand, the holomorphic quadratic differentials associated with these harmonic maps also coincide, namely the Hopf differential of constant mean curvature in $\mathbb{L}^3$ or $\mathbb{H}_1^3(\kappa)$, and the Abresch-Rosenberg differential~\cite{AR} of minimal surfaces in $\mathrm{Nil}_3(\tau)$ or mean curvature $\frac{1}{2}\sqrt{-\kappa}$ surfaces in $\mathbb{H}^2(\kappa)\times\mathbb{R}$. In~\cite{DFM} the whole family of harmonic Gauss maps of critical mean curvature surfaces in $\mathbb{E}(\kappa,\tau)$ is studied, though this becomes transparent via the duality since all of them correspond to the classical Gauss map in $\mathbb{L}^3$ or the hyperbolic Gauss map in $\mathbb{H}_1^3(\kappa)$.

\section{Estimates for entire minimal graphs in Heisenberg space}

The main goal of this section is to illustrate how we can translate a Lorentzian property into the Riemannian setting, namely we will obtain gradient estimates for entire minimal graphs in $\mathrm{Nil}_3(\tau)$ following the arguments in~\cite{MN}. 

Cheng and Yau~\cite{CY2} proved that the support function $\Phi(x,y,z)=x^2+y^2-z^2$ satisfies the following gradient estimate in a complete spacelike surface in $\mathbb{L}^3$:
\begin{equation}\label{eqn:cy}
\|\widehat\nabla\Phi\|^2\leq C(1+\Phi)^2\leq C(1+r^2)^2,
\end{equation}
for some constant $C>0$, where $\widehat\nabla$ denotes the gradient in the surface and $r=(x^2+y^2)^{1/2}$ is the distance to the origin in the base surface $\mathbb{R}^2$. Since the surface is an entire graph by Lemma~\ref{lemma:nonexistence}, we can parameterize it as $F_v$ for some $v\in C^\infty(\mathbb{R}^2)$. The gradient in~\eqref{eqn:cy} can be worked out as the tangent part of the ambient gradient $\overline\nabla\Phi$, so we deduce that
\begin{equation}\label{eqn:L3-estimate1}
\|\widehat\nabla\Phi\|^2=\|\overline\nabla\Phi\|^2+\langle\overline\nabla\Phi,N\rangle^2\geq \langle\overline\nabla\Phi,N\rangle^2=\frac{4(v-xv_x-yv_y)^2}{1-|\nabla v|^2},
\end{equation}
where $N=(1-|\nabla v|^2)^{-1/2}(v_x\partial_x+v_y\partial_y+\partial_z)$ is the upward-pointing unit normal to the surface, and $\nabla v$ is the usual gradient in $\mathbb{R}^2$.

\begin{lemma}\label{lemma:gradient-estimate-L3}
Let $F_v$ be an entire spacelike graph in $\mathbb{L}^3$ with positive constant mean curvature. Then there is a constant $A>0$ such that
\[|\nabla v|^2\leq 1-\frac{A}{(1+r^2)^2}.\]
\end{lemma}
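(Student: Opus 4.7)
My plan is to couple the Cheng--Yau estimate \eqref{eqn:cy} with the pointwise identity \eqref{eqn:L3-estimate1} that has just been derived. Those two inequalities immediately give
\[
1 - |\nabla v|^{2} \;\geq\; \frac{4\,u(x,y)^{2}}{C(1+r^{2})^{2}}, \qquad u(x,y):= v - x v_x - y v_y,
\]
so the conclusion will follow with $A:=4A_{0}/C$ the moment one produces a positive constant $A_{0}$ such that $u^{2}\geq A_{0}$ on $\mathbb{R}^{2}$.

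Geometrically, $u(x,y)$ is the $z$-intercept of the affine tangent plane to $F_v$ at the point $(x,y,v(x,y))$; equivalently $u=-W\langle X,N\rangle$, where $X$ is the position vector, $N$ the upward unit normal, and $W=\sqrt{1-|\nabla v|^{2}}$. The next observation I would exploit is that both the class of entire spacelike graphs of positive constant mean curvature in $\mathbb{L}^{3}$ and the quantity $|\nabla v|^{2}$ are invariant under vertical translations $v\mapsto v+T$, while $u$ transforms as $u\mapsto u+T$. Consequently it suffices to prove the one-sided bound $\inf_{\mathbb{R}^{2}} u > -\infty$, since a sufficiently large translation will then force $u\geq 1$ uniformly, giving $u^{2}\geq 1$ everywhere.

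To produce the lower bound $\inf u > -\infty$ the hypothesis $H>0$ has to come into play. The avenue I would try first is to apply the Cheng--Yau estimate \eqref{eqn:cy} not just to the support function $\Phi(x,y,z) = x^{2} + y^{2} - z^{2}$ but to the whole one-parameter family of translated support functions $\Phi_{z_{0}}(x,y,z) = x^{2}+y^{2}-(z-z_{0})^{2}$, using that Lorentzian translations along the $z$-axis are isometries of $\mathbb{L}^{3}$ so that the Cheng--Yau constant depends only on the intrinsic geometry of $F_v$, not on the centre $z_{0}$; this family of estimates, taken together, should be enough to control $u$ from below. A second, more direct route is to integrate the mean-curvature divergence identity \eqref{eqn:H} against $u$ over expanding disks and combine it with the spacelike constraint $|\nabla v|<1$, in the spirit of the Heinz estimate \eqref{eqn:heinz}.

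The main obstacle I expect is precisely this uniform lower bound for $u$. Already for the model hyperboloid $v=\sqrt{1/H^{2}+x^{2}+y^{2}}$ one has $u=1/(H^{2}v)\to 0$ at infinity, so any argument has to be genuinely global; in particular, the pathology $u\to-\infty$ along escaping sequences must be ruled out using both $H>0$ and the \emph{entire} graph hypothesis in an essential way. I expect this global lower bound to be the technical heart of the argument in~\cite{MN}, with all remaining manipulations being algebraic consequences of the Cheng--Yau inequality and the identity \eqref{eqn:L3-estimate1}.
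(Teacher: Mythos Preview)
Your reduction is correct and matches the paper: combining \eqref{eqn:cy} with \eqref{eqn:L3-estimate1} does reduce the lemma to a uniform positive lower bound for $(v-xv_x-yv_y)^2$, and your further observation that a vertical translation turns the problem into a one-sided bound on $w:=v-xv_x-yv_y$ is legitimate. The gap is exactly where you say it is: neither of your two proposed routes is carried out, and it is not clear that either would succeed. Applying Cheng--Yau to the translated support functions $\Phi_{z_0}$ still only produces gradient-type information of the form $\|\widehat\nabla\Phi_{z_0}\|^2\leq C(1+\Phi_{z_0})^2$; there is no evident mechanism by which the whole family yields a bound on the affine quantity $w$ itself. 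The integration idea is likewise too vague to evaluate.

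The paper fills the gap by a different, geometric argument that you do not mention: Treibergs' theorem that an entire spacelike graph of positive constant mean curvature in $\mathbb{L}^3$ bounds a convex body, so (after a reflection) $v$ is convex. One first translates so that the origin lies on the surface and no straight line through the origin is contained in it; if this normalization is impossible the surface is ruled, hence a hyperbolic cylinder, and the estimate is checked by hand. With the normalization in place, convexity says that along each ray from the origin the tangent-line intercept $w$ is nonincreasing, so for $r\geq 1$ one has $w(x,y)\leq w(x/r,y/r)<0$. Compactness of the unit circle then gives a strictly negative upper bound for $w$ on $\{r\geq 1\}$, hence $w^2\geq M>0$ there; the region $\{r\leq 1\}$ is handled by compactness. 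Note that the paper does \emph{not} establish $\inf w>-\infty$; it instead shows $\sup_{r\geq 1}w<0$ after normalization, which suffices directly. So even the shape of the one-sided bound is different from what you were aiming at.
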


\begin{proof}
In view of~\eqref{eqn:cy} and~\eqref{eqn:L3-estimate1}, it will be enough to show that there is a constant $M>0$ such that $(v-xv_x-yv_y)^2\geq M$ whenever $x^2+y^2>1$. First, we can apply a translation in $\mathbb{L}^3$ such that the origin lies in the surface, but no straight line through the origin is contained in the surface. Were this not possible, the surface would be ruled, and then the surface would be a hyperbolic cylinder, see~\cite{DVVW}, i.e., up to an isometry of $\mathbb{L}^3$ the surface would be given by $v(x,y)=\frac{1}{2H}\sqrt{1+4H^2x^2}$, and the statement follows. In order to estimate $w=v-xv_x-yv_y$, we will use the fact that the surface is the boundary a convex set as proved by Treibergs~\cite{Treibergs}. Hence, up to a mirror reflection with respect to $z=0$, we can also assume that $v$ is a convex function.

Observe that the intersection of the $z$-axis and the tangent line to the surface at $(x,y,v)$ in the direction of the tangent vector $(x,y,xv_x+yv_y)$ is precisely the point $(0,0,w)$. Since the surface is convex and does not contain a line through the origin, we get that
\begin{equation}\label{eqn:L3-estimate2}
w(x,y)\leq w\left(\frac{x}{\sqrt{x^2+y^2}},\frac{y}{\sqrt{x^2+y^2}}\right)<0,\qquad\quad \text{if }x^2+y^2\geq 1.
\end{equation}
Since $w$ is continuous and the unit circle is compact, the existence of the desired constant $M$ follows from equation~\eqref{eqn:L3-estimate2}.
\end{proof}

\begin{theorem}\label{last-area-estimate}
Let $F_u$ be an entire minimal graph in $\mathrm{Nil}_3(\tau)$.
\begin{itemize}
 \item[(a)] There exists a constant $B>0$ such that $|Gu|\leq B(1+r^2)$,
 \item[(b)] There exists a constant $C>0$ such that $|u|\leq C(1+r^2)^{3/2}$.
\end{itemize}
\end{theorem}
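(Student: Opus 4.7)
The plan is to invoke Theorem~\ref{thm:duality} to pass from $F_u$ to its Lorentzian dual in $\mathbb{L}^3$, apply Lemma~\ref{lemma:gradient-estimate-L3} there, and then translate the Lorentzian gradient bound back via the formula $\omega\cdot\widetilde\omega=1$ from~\eqref{eqn:omega}. We may assume $\tau>0$: when $\tau=0$ the ambient is $\mathbb{R}^3$ and Bernstein's classical theorem forces the graph to be a plane, while $\tau<0$ follows from $\tau>0$ after a standard orientation-reversing isometry.

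For part (a), apply Theorem~\ref{thm:duality} with $\kappa=0$ and mean curvature $H=0$: the dual of $F_u$ is a conformal spacelike immersion with constant mean curvature $\tau$ in $\mathbb{L}(0,0)=\mathbb{L}^3$. Since $F_u$ is an entire graph over $\mathbb{R}^2$, Remark~2.1(1) gives that the dual is an entire graph $F_v$. Because $\kappa=0$ (so $\lambda_\kappa\equiv 1$) and the bundle curvature of $\mathbb{L}^3$ is zero, the coordinate formulas of Section~\ref{sec:coordinates} yield $\widetilde Gv=\nabla v$, and Lemma~\ref{lemma:gradient-estimate-L3} applied to $F_v$ provides $A>0$ with
\[
1-|\widetilde Gv|^2=1-|\nabla v|^2\ \geq\ \frac{A}{(1+r^2)^2}.
\]
Combining this with the twin relation $\omega\widetilde\omega=1$ of~\eqref{eqn:omega} gives
\[
1+|Gu|^2=\frac{1}{1-|\widetilde Gv|^2}\leq\frac{(1+r^2)^2}{A},
\]
which yields the estimate $|Gu|\leq B(1+r^2)$ with $B=A^{-1/2}$, proving (a).

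For part (b), I would recover $\nabla u$ from $Gu$ via $\nabla u=Gu+Z$, where $Z$ is the projected gradient from~\eqref{eqn:G}. In the standard model of $\mathrm{Nil}_3(\tau)$, $Z$ is the rotational vector field $-\tau y\,\partial_x+\tau x\,\partial_y$, so $\|Z\|=\tau r$. Combining with part (a), $|\nabla u|\leq B(1+r^2)+\tau r\leq B'(1+r^2)$ for a suitable constant $B'>0$. Integrating along the radial segment from the origin to an arbitrary point at Euclidean distance $r$,
\[
|u(x,y)-u(0,0)|\leq\int_0^r B'(1+s^2)\,\mathrm{d}s=B'\!\left(r+\tfrac{r^3}{3}\right),
\]
and since $r+r^3/3\leq C''(1+r^2)^{3/2}$, we obtain $|u|\leq C(1+r^2)^{3/2}$.

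The crux is really part (a): the hard work (the Cheng--Yau type estimate \eqref{eqn:cy} and its refinement in Lemma~\ref{lemma:gradient-estimate-L3}) is done Lorentzian-side, and the duality converts it into a Riemannian gradient estimate essentially for free through the identity $\omega\widetilde\omega=1$. The main conceptual obstacle is just making sure that the hypotheses of Lemma~\ref{lemma:gradient-estimate-L3} (entire, positive constant mean curvature spacelike graph in $\mathbb{L}^3$) are indeed what the duality hands us; the remaining integration is a routine radial argument.
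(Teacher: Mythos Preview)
Your proof is correct and follows essentially the same route as the paper: pass to the dual entire spacelike graph in $\mathbb{L}^3$, apply Lemma~\ref{lemma:gradient-estimate-L3}, convert via $\omega\widetilde\omega=1$ to get (a), and then bound $|\nabla u|\le|Gu|+|Z|$ and integrate for (b). If anything, you are slightly more careful than the paper in explicitly reducing to the case $\tau>0$ (needed for the \emph{positive} mean curvature hypothesis of Lemma~\ref{lemma:gradient-estimate-L3}) and in spelling out the radial integration.
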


\begin{proof}
The dual graph $F_v$ is an entire spacelike graph in $\mathbb{L}^3$ with constant mean curvature $\tau$. Since these surfaces have reciprocal angle functions, from Lemma~\ref{lemma:gradient-estimate-L3} we get that there exists $A>0$ such that $1+|Gu|^2=(1-|\nabla v|^2)^{-1}\leq A^{-1}(1+r^2)^2<1+A^{-1}(1+r^2)^2$, and we get item (a) by just taking $B=A^{-1/2}$.

Applying the Minkowski inequality to the expression $\nabla u=Gu-Z$, where $Z=-\tau y\partial_x+\tau x\partial_y$, we get that  $|\nabla u|\leq |Gu|+|Z|\leq B(1+r^2)+\tau r$. Hence $|\nabla u|$ grows at most quadratically in $r$, from where it is easy to see that there exists a constant $C>0$ satisfying item (b). 
\end{proof}

Let $F_u$ in $\mathbb{E}(\kappa,\tau)$ and $F_v$ in $\mathbb{L}(\kappa,\tau)$ be dual graphs over a domain $\Omega\subset\mathbb{M}^2(\kappa)$. It follows from~\eqref{eqn:1ff} that the absolute value of the determinant of the Jacobian of $\pi:\mathbb{E}(\kappa,\tau)\to\mathbb{M}^2(\kappa)$ or $\widetilde\pi:\mathbb{L}(\kappa,\tau)\to\mathbb{M}^2(\kappa)$ restricted to $F_u$ or $F_v$ coincides with the angle function $\nu$ or $\widetilde\nu$. Using the change of variables formula, if $f$ is a positive measurable function on $\Omega$, we get that
\begin{equation}\label{eqn:integration}
\int_{F_u} (f\circ\pi)\nu=\int_{\Omega}f=\int_{F_v}(f\circ\widetilde\pi)\widetilde\nu.
\end{equation}
In particular, the area of $\Omega$ is equal to the integral of $\nu$ or $\widetilde\nu$ over the surface. In the particular case of minimal graphs in $\mathrm{Nil}_3(\tau)$, this implies that $\nu$ is not integrable since $\mathbb{R}^2$ has infinite area. The estimates given by Theorem~\ref{last-area-estimate} show that $\nu$ is not square integrable either.

\begin{corollary}~
\begin{enumerate}
 \item[(a)] The angle function of an entire minimal graph in $\mathrm{Nil}_3(\tau)$ is not square-integrable.
 \item[(b)] An entire graph with positive constant mean curvature in $\mathbb{L}^3$ has infinite area.
\end{enumerate} 
\end{corollary}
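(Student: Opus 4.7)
The plan is to reduce both parts to a common pointwise lower bound on the angle function of an entire minimal graph in $\mathrm{Nil}_3(\tau)$, and then apply the change-of-variables formula~\eqref{eqn:integration}.

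First I fix an entire minimal graph $F_u$ in $\mathrm{Nil}_3(\tau)$. By~\eqref{eqn:angle}, its angle function is $\nu=1/\omega=1/\sqrt{1+|Gu|^2}$, and Theorem~\ref{last-area-estimate}(a) provides $|Gu|\leq B(1+r^2)$ with $r^2=x^2+y^2$. Combining these,
\[\nu\geq\frac{1}{\sqrt{1+B^2(1+r^2)^2}}\geq\frac{c}{1+r^2}\]
for some constant $c>0$. This is the central estimate driving everything.

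For part~(a), I specialize~\eqref{eqn:integration} by taking $f=\nu$ regarded as a function on $\Omega=\mathbb{R}^2$ (using that $\pi|_{F_u}$ is a diffeomorphism to identify $\nu$ with its pullback). This yields
\[\int_{F_u}\nu^2=\int_{F_u}(\nu\circ\pi)\,\nu=\int_{\mathbb{R}^2}\nu\geq c\int_{\mathbb{R}^2}\frac{dx\,dy}{1+r^2}=2\pi c\int_0^\infty\frac{r\,dr}{1+r^2}=+\infty,\]
so $\nu\notin L^2(F_u)$.

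For part~(b), let $F_v$ be an entire graph in $\mathbb{L}^3=\mathbb{L}(0,0)$ with constant mean curvature $H>0$. Theorem~\ref{thm:duality} together with the remark following it produce a dual entire minimal graph $F_u$ in $\mathbb{E}(0,H)=\mathrm{Nil}_3(H)$ (being a graph is preserved, and the common base domain is $\mathbb{R}^2$). The area form of the spacelike graph $F_v$ is $\widetilde\omega\,dx\,dy$, and~\eqref{eqn:omega} combined with~\eqref{eqn:angle} gives $\widetilde\omega=1/\omega=\nu$. Therefore
\[\mathrm{Area}(F_v)=\int_{\mathbb{R}^2}\widetilde\omega\,dx\,dy=\int_{\mathbb{R}^2}\nu\,dx\,dy=+\infty\]
by the very same integral estimate.

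The main obstacle is conceptual rather than computational: the argument hinges on the quadratic gradient bound of Theorem~\ref{last-area-estimate}(a), which translates through the reciprocity $\nu=\widetilde\omega$ into a lower bound on the angle function just slow enough to force non-integrability when paired with the borderline divergent integral $\int_0^\infty r\,dr/(1+r^2)$. Had the estimate in Theorem~\ref{last-area-estimate}(a) been worse than quadratic the argument would fail, so the corollary is in some sense sharp against the available gradient control.
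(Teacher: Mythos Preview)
Your argument is correct and follows essentially the same route as the paper: both reduce parts (a) and (b) to the non-integrability of $\nu$ over $\mathbb{R}^2$, obtain the lower bound $\nu\geq c/(1+r^2)$ from Theorem~\ref{last-area-estimate}(a), and finish with the divergent polar integral $\int_0^\infty r\,dr/(1+r^2)$. The only cosmetic difference is that for (b) you compute the area element $\widetilde\omega\,dx\,dy$ directly, whereas the paper feeds $f=\nu$ into~\eqref{eqn:integration} and uses $\nu\widetilde\nu=1$ to identify $\int_{F_v}1$ with $\int_{\mathbb{R}^2}\nu$; these are the same computation unpacked differently.
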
 

\begin{proof}
Plugging $f\circ\pi=\nu$ in Equation~\eqref{eqn:integration}, it will be enough to show that $\nu$ is not integrable in $\Omega$. By means of Theorem~\ref{last-area-estimate}, we get that $\nu=(1+|Gu|^2)^{1/2}\leq B'(1+r^2)$ for some $B'>0$. Integrating in polar coordinates in a disk $D_R\subset\mathbb{R}^2$ of radius $R$ centered at the origin, we get that
\[\int_{D_R}\nu=\int_{D_R}\frac{1}{\sqrt{1+|Gu|^2}}\geq\int_0^R\frac{2\pi r\,\mathrm{d} r}{B'(1+r^2)}.\]
Since the last integral diverges, we are done.
\end{proof}  

\section{Other results}

\subsection{Behaviour with respect to isometries}

Lee showed that translating and rotating a surface in $\mathrm{Nil}_3(\tau)$ corresponds to translating and rotating the dual surface in $\mathbb{L}^3$, respectively~\cite{Lee}. Here we will present this result from a more abstract point of view that applies to all $\mathbb{E}(\kappa,\tau)$-spaces (and also works in the Killing submersion setting). Problems related to the congruence of dual surfaces have been already treated in the literature, see~\cite{AL,KL}.

Let $\mathrm{Iso}_+(\mathbb{E}(\kappa,\tau),\xi)$ be the group of direct isometries of $\mathbb{E}(\kappa,\tau)$ preserving the unit Killing vector field $\xi$ and the orientation. It is worth pointing out that, if $\kappa-4\tau^2\neq 0$, then any isometry $T$ of $\mathbb{E}(\kappa,\tau)$ satisfies $T_*\xi=\pm\xi$, whereas any isometry is direct if $\tau\neq 0$. The group $\mathrm{Iso}_+(\mathbb{L}(\kappa,\tau),\xi)$ is defined likewise. We are not considering the case $T_*\xi=-\xi$ due to the fact that such isometries change the sign of the mean curvature of the graphs.

For any $T\in\mathrm{Iso}_+(\mathbb{E}(\kappa,\tau),\xi)$, there is a direct isometry $h\in\mathrm{Iso}_+(\mathbb{M}^2(\kappa))$ such that the following diagram is commutative
\[\begin{tikzcd}
 \mathbb{E}(\kappa,\tau)
 	\arrow[rr, rightarrow, "T"]&&
 \mathbb{E}(\kappa,\tau)
    \arrow[d, rightarrow,"\pi"]\\
 \mathbb{M}^2(\kappa)
 	\arrow[u, leftarrow, "\pi"]
 	\arrow[rr, rightarrow, "h"]&&
 \mathbb{M}^2(\kappa)
\end{tikzcd}\]
This association is a group morphism $\mathrm{Iso}_+(\mathbb{E}(\kappa,\tau),\xi)\to\mathrm{Iso}_+(\mathbb{M}^2(\kappa))$ whose kernel is the subgroup of vertical translations. Conversely, given $h\in\mathrm{Iso}_+(\mathbb{M}^2(\kappa))$, there is $T\in\mathrm{Iso}_+(\mathbb{E}(\kappa,\tau),\xi)$ making the diagram commutative, and such a $T$ is unique up to vertical translations. The proof of these results can be found in~\cite{Man}, and similar arguments work in the Lorentzian setting. Therefore we can define a bijective map
\[R:\mathrm{Iso}_+(\mathbb{E}(\kappa,\tau),\xi)\to\mathrm{Iso}_+(\mathbb{L}(\kappa,\tau),\xi),\]
such that, for each $T\in\mathrm{Iso}_+(\mathbb{E}(\kappa,\tau),\xi)$, the image $R(T)$ is the only isometry in $\mathrm{Iso}_+(\mathbb{L}(\kappa,\tau),\xi)$ that projects to the same isometry in $\mathrm{Iso}_+(\mathbb{M}^2(\kappa))$ as $T$, and such that $R(T)(0,0,0)=T(0,0,0)$.

\begin{proposition}\label{prop:transform}
Let $X:\Sigma\to\mathbb{E}(\kappa,\tau)$ and $\widetilde X:\Sigma\to\mathbb{L}(\kappa,H)$ be dual conformal immersions, and let $T\in\mathrm{Iso}_+(\mathbb{E}(\kappa,\tau),\xi)$. Then $T\circ X$ and $R(T)\circ\widetilde X$ are dual conformal immersions.
\end{proposition}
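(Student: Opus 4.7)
The plan is to invoke Theorem~\ref{thm:duality} together with the uniqueness statement in item~2 of the Remark following it. More precisely, Theorem~\ref{thm:duality} applied to $T\circ X$ produces a dual immersion in $\mathbb{L}(\kappa,H)$ that is unique up to a vertical translation, and I would identify $R(T)\circ\widetilde X$ as such a dual by checking that it satisfies the defining properties.

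First I would verify that the pair $(T\circ X,\,R(T)\circ\widetilde X)$ meets the hypotheses of items~(a) and~(b) of Theorem~\ref{thm:duality}. Since $T$ is an isometry, $T\circ X$ is automatically a conformal immersion; since $T$ is direct and satisfies $T_*\xi=\xi$, both the mean curvature $H$ (with its sign) and the property of being nowhere vertical are preserved, because the angle function transforms as $\langle T_*N,\xi\rangle_{T\circ X(p)}=\langle N,\xi\rangle_{X(p)}$. The same argument on the Lorentzian side shows that $R(T)\circ\widetilde X$ is a conformal spacelike immersion into $\mathbb{L}(\kappa,H)$ with constant mean curvature $\tau$.

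Next I would exploit the defining property of the map $R$: by construction, $T$ and $R(T)$ cover the same base isometry $h\in\mathrm{Iso}_+(\mathbb{M}^2(\kappa))$, so $\pi\circ T=h\circ\pi$ and $\widetilde\pi\circ R(T)=h\circ\widetilde\pi$. Combined with the projection identity $\pi\circ X=\widetilde\pi\circ\widetilde X$ coming from the duality of $X$ and $\widetilde X$, this gives
\[
\pi\circ(T\circ X)=h\circ\pi\circ X=h\circ\widetilde\pi\circ\widetilde X=\widetilde\pi\circ(R(T)\circ\widetilde X).
\]
Invoking the uniqueness (up to vertical translation) part of Theorem~\ref{thm:duality}, this projection identity forces $R(T)\circ\widetilde X$ to coincide with the dual of $T\circ X$ up to a vertical translation in $\mathbb{L}(\kappa,H)$, which is the content of the proposition.

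The main obstacle I anticipate is the bookkeeping of the vertical translation. The duality is only well defined up to vertical translations in $\mathbb{L}(\kappa,H)$, and the normalization $R(T)(0,0,0)=T(0,0,0)$ imposed in the definition of $R$ is exactly what pins down a canonical representative; verifying that this normalization is the one singled out by the twin relations (rather than some other vertical translation of it) reduces to a single-point check. By contrast, the verification of the preservation of conformality, mean curvature, and the spacelike/nowhere-vertical conditions under $T$ and $R(T)$ is routine and is the reason why the direct $\xi$-preserving hypothesis is built into the definition of $\mathrm{Iso}_+(\mathbb{E}(\kappa,\tau),\xi)$.
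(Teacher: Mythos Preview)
Your approach is natural but has a genuine gap in the final step. The uniqueness in Theorem~\ref{thm:duality} (and item~2 of the subsequent Remark) asserts that the dual of a given $X$ is determined by the twin relations~\eqref{thm:duality:eqn3} up to a vertical translation; it does \emph{not} say that any conformal spacelike immersion with constant mean curvature~$\tau$ sharing the projection of $X$ must be its dual. The identity $\pi\circ X=\widetilde\pi\circ\widetilde X$ is a consequence of duality, not a characterization of it. Concretely, take $\tau=H=0$: if $F_v$ is the maximal graph in $\mathbb{M}^2(\kappa)\times\mathbb{R}_1$ dual to a nontrivial minimal graph $F_u$ in $\mathbb{M}^2(\kappa)\times\mathbb{R}$, then $F_{-v}$ is also a maximal graph, induces the \emph{same} conformal structure (the first fundamental form in~\eqref{eqn:1ff} is even in $(\widetilde\alpha,\widetilde\beta)$), and has the same projection, yet $F_{-v}$ is the dual of $F_{-u}$ rather than of $F_u$, and the two are not related by a vertical translation. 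Thus the three conditions you check (conformal, constant mean curvature~$\tau$, matching projection) do not single out the dual.

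The missing ingredient is precisely the verification that $R(T)\circ\widetilde X$ satisfies the twin relations~\eqref{thm:duality:eqn3} relative to $T\circ X$, and this is what the paper does directly: writing the transformed immersions locally as graphs $F_{\overline u}$ and $F_{\overline v}$ over $h(\Omega)$, one combines $\pi\circ T=h\circ\pi$ with~\eqref{eqn:N} to obtain $G\overline u=h_*Gu$ and, by the same argument on the Lorentzian side, $\widetilde G\overline v=h_*\widetilde Gv$; applying $h_*$ to~\eqref{thm:duality:eqn3} then yields the twin relations for the pair $(F_{\overline u},F_{\overline v})$. Your projection identity is the right first observation, but the argument cannot be closed by invoking uniqueness alone; one must transport the generalized gradients under $h_*$, which amounts to checking the twin relations after all.
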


\begin{proof}
Let $h\in\mathrm{Iso}_+(\mathbb{M}^2(\kappa))$ the isometry to which both $T$ and $R(T)$ project in the base surface. Since the computation can be localized, let us assume that $X$ and $\widetilde X$ are given as graphs $F_u$ and $F_v$ over a simply connected domain $\Omega\subset\mathbb{M}^2(\kappa)$. Let $F_{\overline u}$ and $F_{\overline v}$ be the graphs over $h(\Omega)$ associated to $T\circ F_u$ and $R(T)\circ F_v$, respectively. From Equation~\eqref{eqn:N}, where $N$ is a unit normal to $F_u$, and the fact that $T_*N$ is an unit normal to $F_{\overline u}$, the condition $\pi\circ T=h\circ\pi$ allows us to work out
\begin{equation}\label{eqn:transform1}
\frac{G\overline{u}}{\sqrt{1+\|G\overline u\|^2}}=\pi_*(T_*N)=h_*(\pi_*N)=\frac{h_*G{u}}{\sqrt{1+\|Gu\|^2}}.
\end{equation}
Taking squared norms in~\eqref{eqn:transform1}, we easily reach $\|Gu\|=\|G\overline u\|$ via the isometry $h$. We deduce that $G\overline u=h_*Gu$, and likewise we can check that $\widetilde G\overline v=h_*\widetilde Gv$. Therefore it suffices to apply $h_*$ to Equation~\eqref{thm:duality:eqn3} to realize that $F_{\overline u}$ and $F_{\overline v}$ also satisfy the same twin relations, so they are dual graphs.
\end{proof}

Nonetheless, not all the isometries preserve the Killing vector field, and there are cases where this situation becomes specially interesting. In the case the surface has critical constant mean curvature, the dual surface lies in a space form, whose isometry group has dimension 6 (two extra dimensions of isometries not preserving the Killing direction). This leads to non-trivial 2-parameter deformation of critical mean curvature surfaces. More precisely:
\begin{enumerate}
 	\item There is a 2-parameter deformation of constant mean curvature $H$ surfaces in $\mathbb{E}(\kappa,\tau)$ with $\kappa+4H^2=0$. It corresponds to hyperbolic and parabolic rotations in $\mathbb{L}^3$ or in the anti-de Sitter space $\mathbb{H}^3_1(\kappa)$.
 	\item There is a 2-parameter deformation of spacelike constant mean curvature $H$ surfaces in $\mathbb{L}(\kappa,\tau)$ with $\kappa-4H^2=0$. It corresponds to rotations with respect to non-vertical axes in $\mathbb{R}^3$ or in the round sphere $\mathbb{S}^3(\kappa)$.
 \end{enumerate}
Proposition~\ref{prop:transform} shows that the above items 1 and 2 reflect all possible non trivial actions of isometries.

\subsection{Hessian equations}

Here we will present a classical way of constructing solutions to the Hessian-one equation $f_{xx}f_{yy}-f_{xy}^2=1$ in Euclidean plane by means of solutions to the minimal surface equation in $\mathbb{R}^3$ (or, equivalently, solutions to the maximal surface equation in $\mathbb{L}^3$). The technique we will explain below, was pointed out by Calabi~\cite{Calabi}, though the same ideas had already been used before. For instance, the same arguments were applied by Osserman~\cite{Osserman} to classify all entire minimal 2-dimensional graphs of the form $(x, y, f(x,y), g(x,y))$ in $\mathbb{R}^4$, see~\cite{Lee3}. Also Nitsche~\cite{Nitsche}, following Heinz' idea, solved the classical Bernstein problem in $\mathbb{R}^3$ by means of entire solutions to the Hessian-one equation.

\begin{lemma}\label{lemma:identities}
Let $F_u$ be a graph in $\mathbb{E}(\kappa,\tau)$ with mean curvature $H$, not necessarily constant. Then:
\begin{align*}
 \frac{1}{\lambda_\kappa^2}\left[\left(\frac{\lambda_\kappa^2(1+\alpha^2)}{\omega}\right)_y-\left(\frac{\lambda_\kappa^2\alpha\beta}{\omega}\right)_x\right]&=\frac{2\tau\alpha}{\omega}-2H\beta+\frac{(\lambda_\kappa)_y}{\lambda_\kappa}\frac{1+\omega^2}{\omega},\\
 \frac{1}{\lambda_\kappa^2}\left[\left(\frac{\lambda_\kappa^2(1+\beta^2)}{\omega}\right)_x-\left(\frac{\lambda_\kappa^2\alpha\beta}{\omega}\right)_y\right]&=-\frac{2\tau\beta}{\omega}-2H\alpha+\frac{(\lambda_\kappa)_x}{\lambda_\kappa}\frac{1+\omega^2}{\omega}.
\end{align*}
\end{lemma}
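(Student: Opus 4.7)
The plan is to prove the first identity by direct computation and to deduce the second by symmetry. Under the substitution $(x,y) \mapsto (y,x)$, $\tau \mapsto -\tau$, $u(x,y) \mapsto u(y,x)$, a short inspection of the defining formulas shows that $\alpha$ and $\beta$ swap, while $\omega$ and $\lambda_\kappa$ (which depends only on $x^2+y^2$) are preserved, and the mean curvature transforms as $H(x,y) \mapsto H(y,x)$. Applying the first identity to this transformed data and re-evaluating at $(y,x)$ then yields the second identity at $(x,y)$, so only the first identity requires proof.

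For the first identity, the approach is to expand the LHS via the product and quotient rules, applying the elementary formulas $\omega_j = (\alpha\alpha_j + \beta\beta_j)/\omega$ for $j \in \{x,y\}$ to dispatch the derivatives of $1/\omega$, together with the algebraic identities $\omega^2 = 1+\alpha^2+\beta^2$ and $1+\alpha^2 = \omega^2 - \beta^2$. The resulting expression is then to be reorganized using two dynamical inputs. The first is the compatibility relation
\[\alpha_y - \beta_x = 2\tau - \frac{(\lambda_\kappa)_y}{\lambda_\kappa}(\alpha - \tau y) + \frac{(\lambda_\kappa)_x}{\lambda_\kappa}(\beta + \tau x),\]
which comes from $u_{xy}=u_{yx}$ and the definitions of $\alpha$, $\beta$; once multiplied by $\alpha/\omega$, this contributes the term $2\tau\alpha/\omega$ on the RHS together with auxiliary $(\lambda_\kappa)_j/\lambda_\kappa$ pieces. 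The second is the mean curvature equation~\eqref{eqn:H} in coordinates,
\[2H\lambda_\kappa^2 = \partial_x(\lambda_\kappa\alpha/\omega) + \partial_y(\lambda_\kappa\beta/\omega),\]
which, after multiplication by $-\beta$, produces the $-2H\beta$ term plus further $(\lambda_\kappa)_j/\lambda_\kappa$ and $\omega_j$ contributions.

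The main technical obstacle will be the careful bookkeeping required to verify that all the $(\lambda_\kappa)_j/\lambda_\kappa$ contributions, namely those coming from the factor $\lambda_\kappa^2$ in the LHS, from the compatibility relation, and from the expansion of the mean curvature equation, combine exactly into the residual $\frac{(\lambda_\kappa)_y}{\lambda_\kappa}\frac{1+\omega^2}{\omega}$. The key algebraic step at the end is $1+\omega^2 = 2+\alpha^2+\beta^2$, which lets one package the surviving $(\lambda_\kappa)_y/\lambda_\kappa$ coefficient into the desired form; the spurious mixed terms of the form $\alpha\tau y\, (\lambda_\kappa)_y/\lambda_\kappa$ and $\alpha\tau x\,(\lambda_\kappa)_x/\lambda_\kappa$ from the compatibility relation should cancel against analogous contributions arising from the $-\beta\,\partial_x(\lambda_\kappa\alpha/\omega)$ expansion. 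Once the first identity is established, the second follows from it by the symmetry described above.
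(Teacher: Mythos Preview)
Your proposal is correct and follows essentially the same route as the paper: both are direct computations in which the second-order derivatives of $u$ appearing in the left-hand side are reorganized so that they assemble into the mean curvature $H$, while the mixed term $\alpha_y-\beta_x$ produces the $2\tau\alpha/\omega$ contribution. The paper carries this out slightly differently, first establishing the ``bare'' identities for $\bigl(\tfrac{1+\alpha^2}{\omega}\bigr)_y-\bigl(\tfrac{\alpha\beta}{\omega}\bigr)_x$ with the $\kappa$-curvature terms written explicitly in $x,y$, and only afterwards substituting $(\lambda_\kappa)_x=-\tfrac{\kappa}{2}x\lambda_\kappa^2$, $(\lambda_\kappa)_y=-\tfrac{\kappa}{2}y\lambda_\kappa^2$ and inserting the $\lambda_\kappa^2$ factor; you instead attack the full left-hand side at once and keep the $(\lambda_\kappa)_j/\lambda_\kappa$ terms abstract throughout. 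Your symmetry observation $(x,y,\tau)\mapsto(y,x,-\tau)$, which swaps $\alpha\leftrightarrow\beta$ and reduces the second identity to the first, is a pleasant shortcut the paper does not mention. One caution: your claim that the $\alpha\tau y\,(\lambda_\kappa)_y/\lambda_\kappa$ and $\alpha\tau x\,(\lambda_\kappa)_x/\lambda_\kappa$ terms cancel specifically against contributions from $-\beta\,\partial_x(\lambda_\kappa\alpha/\omega)$ is not quite where the cancellation occurs; explicit $\tau$-terms also enter through $\alpha_x$ and $\beta_y$ (since $u_x/\lambda_\kappa=\alpha-\tau y$ and $u_y/\lambda_\kappa=\beta+\tau x$), and all of these must be tracked together. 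The bookkeeping is genuinely tedious but mechanical, as the paper itself acknowledges.
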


\begin{proof}
It is a long but straightforward computation to check that the following formulas hold true (it suffices to do the derivatives in the \textsc{lhs} in terms of $u$ and gather the resulting terms, taking into account that the second-order terms must be collected into $H$):
\begin{equation}\label{lema:Hessian:eqn1}
\begin{aligned}
 \left(\frac{1+\alpha^2}{\omega}\right)_y-\left(\frac{\alpha\beta}{\omega}\right)_x&=\frac{2\tau\alpha}{\omega}-2H\beta-\kappa\,\frac{\lambda_\kappa}{\omega}\left(x\alpha\beta-\tfrac{y}{2}(\alpha^2-\beta^2)\right),\\
 \left(\frac{1+\beta^2}{\omega}\right)_x-\left(\frac{\alpha\beta}{\omega}\right)_y&=-\frac{2\tau\beta}{\omega}-2H\alpha-\kappa\,\frac{\lambda_\kappa}{\omega}\left(\tfrac{x}{2}(\alpha^2-\beta^2)+y\alpha\beta\right).
\end{aligned}
\end{equation}
Since the derivatives of $\lambda_\kappa$ satisfy $(\lambda_\kappa)_x=-\frac{\kappa}{2}x\lambda_\kappa^2$ and $(\lambda_\kappa)_y=-\frac{\kappa}{2}y\lambda_\kappa^2$, we can use them to eliminate $x$ and $y$ in~\eqref{lema:Hessian:eqn1} and reach the following two identities:
\begin{align*}
 \left(\frac{1+\alpha^2}{\omega}\right)_y-\left(\frac{\alpha\beta}{\omega}\right)_x&=\frac{2\tau\alpha}{\omega}-2H\beta+2\frac{(\lambda_\kappa)_x}{\lambda_\kappa}\frac{\alpha\beta}{\omega}-\frac{(\lambda_\kappa)_y}{\lambda_\kappa}\frac{\alpha^2-\beta^2}{\omega},\\
 \left(\frac{1+\beta^2}{\omega}\right)_x-\left(\frac{\alpha\beta}{\omega}\right)_y&=-\frac{2\tau\beta}{\omega}-2H\alpha +\frac{(\lambda_\kappa)_x}{\lambda_\kappa}\frac{\alpha^2-\beta^2}{\omega}+2\frac{(\lambda_\kappa)_y}{\lambda_\kappa}\frac{\alpha\beta}{\omega}.
\end{align*}
The identities in the statement follow from grouping the different derivatives.
\end{proof}

The formulas in the statement may seem artificial, but their \textsc{lhs} are related to $\mathrm{div}(Je_1)$ and $\mathrm{div}(Je_2)$, where $\{e_1,e_2\}$ is the frame given by Equation~\eqref{eqn:tangent-frame}, the divergence is computed in $\mathbb{M}^2(\kappa)$, and $J$ is a $\frac{\pi}{2}$-rotation in the tangent bundle.

If $\kappa=\tau=H=0$, then $\lambda_\kappa\equiv 1$ and the aforementioned formulas have been obtained in the literature by means of different techniques, see~\cite{Osserman,Lee4}. From Lemma~\ref{lemma:identities} we get that
\begin{equation}\label{eqn:Hessian1}
\begin{aligned}
 \left(\frac{1+\alpha^2}{\omega}\right)_y-\left(\frac{\alpha\beta}{\omega}\right)_x&=0,\\
 \left(\frac{1+\beta^2}{\omega}\right)_x-\left(\frac{\alpha\beta}{\omega}\right)_y&=0.
\end{aligned}
\end{equation}
If $u$ is defined on a simply-connected domain $\Omega\subseteq\mathbb{R}^2$, then Poincar\'{e}'s Lemma guarantees the existence of $g,h\in C^\infty(\Omega)$ such that~\eqref{eqn:Hessian1} gives
\begin{equation}\label{eqn:Hessian2}
\frac{1+\alpha^2}{\omega}=g_y,\quad \frac{\alpha\beta}{\omega}=g_x,\quad\frac{1+\beta^2}{\omega}=h_y,\quad\frac{\alpha\beta}{\omega}=h_x.
\end{equation}
Now the second and third equations in~\eqref{eqn:Hessian2} imply that $g_x=h_y$, so again Poincar\'{e}'s Lemma yields the existence of $f\in C^\infty(\Omega)$ such that $g=f_y$ and $h=f_x$. Thus, the identities in~\eqref{eqn:Hessian2} can be rewritten as
\begin{align*}
f_{yy}&=\frac{1+\alpha^2}{\omega},&
f_{xy}&=\frac{\alpha\beta}{\omega},&
f_{xx}&=\frac{1+\beta^2}{\omega}.
\end{align*}
and hence the function $f$ satisfies
\[f_{xx}f_{yy}-f_{xy}^2=\frac{1+\beta^2}{\omega}\cdot\frac{1+\alpha^2}{\omega}-\frac{\alpha^2\beta^2}{\omega^2}=1.\]

\begin{remark}
There are further relations that connect these equations with others that will not be explored here. As a sample, any solution of $f_{xx} f_{yy} - f_{xy}^2 =1$ satisfies the property that the gradient map $(x,y)\mapsto (x,y,f_x,f_y)$ is a parametrization of a minimal surface in $\mathbb{R}^4$, or equivalently, a special Lagrangian surface in $\mathbb{C}^2$ or a holomorphic curve in $\mathbb{C}^2$ for some complex structure, see~\cite{Lee3}.
\end{remark}

\section{Open questions}

In this last section, we will pose three questions that arose in the above discussions, and that could be of interest in a further development of twin correspondences.

\begin{question}\label{q1}
Let $M$ be a Riemannian surface, and let $H\in C^\infty(M)$ be an arbitrary function (with $\int_MH=0$ if $M$ is compact). If $\pi:\mathbb{L}\to M$ is a Lorentzian Killing submersion over $M$ whose fibers have infinite length, is there an entire graph in $\mathbb{L}$ with prescribed mean curvature $H$?
\end{question}

This question is equivalent (if $M$ is simply connected) to that of finding entire minimal graphs in a Riemannian Killing submersion over $M$ with prescribed bundle curvature, see~\cite{LeeMan}, and the author conjectures that the solution to Question~\ref{q1} is affirmative. This is the case provided that the base surface is a sphere (even if the Killing vector field is non-unitary), see~\cite{LerMan}. Nonetheless, this question remains unsolved in the case of the Lorentz-Minkowski space $\mathbb{L}^3$. The condition of fibers with infinite length ensures that the submersion admits entire sections.

\begin{question}\label{q2}
Calabi associated solutions of the minimal surface equation in $\mathbb{R}^3$ with solutions of the Hessian-one equation. Is it possible to generalize this idea to associate solutions of the constant mean curvature equation in $\mathbb{E}(\kappa,\tau)$ with solutions to another natural non-linear \textsc{pde}?
\end{question}

\begin{question}\label{q3}
Let $\Sigma$ be a constant mean curvature surface in $\mathbb{R}^3$ which is a graph over a plane with zero boundary values and intersecting the plane orthogonally, and let us consider the dual maximal surface in $\mathrm{Nil}_3^1(\tau)$. Is it always possible to extend this surface as a zero mean curvature surface (not spacelike) beyond the boundary of the domain? 
\end{question}

In the cases we are able to work it out (see Examples~\ref{example1} and~\ref{example2}), it seems that the dual of such a surface is extended beyond the lightlike boundary, so this property could be understood as a dual Schwarz reflection principle.


\begin{thebibliography}{99}

\bibitem{AR}
U. Abresch, H. Rosenberg.
Generalized Hopf differentials.
\emph{Mat. Contemp.} \textbf{28} (2005), 1--28.

\bibitem{A} 
A. L. Albujer. New examples of entire maximal graphs in $\mathbb{H}^2\times\mathbb{R}_1$.
\emph{Differential Geom. Appl.} \textbf{26} (2008), no. 4, 456--462.

\bibitem{AA}
A. L. Albujer, L. J. Al\'{i}as. 
Calabi-Bernstein results for maximal surfaces in Lorentzian product spaces. 
\emph{J. Geom. Phys.} \textbf{59} (2009), no. 5, 620--631.

\bibitem{AL}
H. Ara\'{u}jo, M. L. Leite.
How many maximal surfaces do correspond to one minimal surface? 
\emph{Math. Proc. Camb. Phil. Soc.} \textbf{146} (2009), 165--175. 

\bibitem{Calabi}
E. Calabi.
Examples of Bernstein problems for some non-linear equations, 
\emph{Proc. Sympos. Pure Math.} \textbf{15} (1970), Amer. Math. Soc., Providence, RI, 223--230.
 
\bibitem{Catalan}
E. Catalan.
M\'{e}moire sur les surfaces dont les rayons de courbure, en chaque point, sont \'{e}gaux et de signes contraires.
\emph{C. R. Acad. Sci. Paris} \textbf{41} (1855), 1019--1023.

\bibitem{CY2} S. Y. Cheng, S. T. Yau.
\newblock Maximal space-like hypersurfaces in the Lorentz-Minkowski spaces.
\newblock \emph{Ann. of Math. (2)} \textbf{104} (1976), no. 3, 407--419.

\bibitem{Dan}
B. Daniel. Isometric immersions into 3-dimensional homogeneous manifolds. 
\emph{Comment. Math. Helv.} \textbf{82} (2007), no. 1, 87--131.

\bibitem{Dan2}
B. Daniel. The Gauss map of minimal surfaces in the Heisenberg group. 
\emph{Int. Math. Res. Not.} \textbf{2011} (2011), no. 3, 674--695.

\bibitem{DFM}
B. Daniel, I. Fern\'{a}ndez, P. Mira.
The Gauss map of surfaces in $\widetilde{\mathrm{PSL}}_2(\mathbb{R})$.
\emph{Calc. Var.} \textbf{52} (2015), 507--528.

\bibitem{DH}
B. Daniel, L. Hauswirth. 
Half-space theorem, embedded minimal annuli and minimal graphs in the Heisenberg group. 
\emph{Proc. Lond. Math. Soc. (3)} \textbf{98} (2009), no. 2, 445--470.

\bibitem{DHM}
B. Daniel, L. Hauswirth, P. Mira. 
\emph{Lecture notes on homogeneous 3--manifolds}. 4th KIAS workshop on Differential Geometry, Seoul, 2009.

\bibitem{DVVW} F. Dillen, I. Van der Woewtyne, L. Vestraelen, J. Walrave.
\newblock Ruled surfaces of constant mean curvature in $3$-dimensional Minkowski space.
\newblock Geometry and topology of submanifolds, VIII, 145--147, World Sci. Publ., River Edge, 1996.

\bibitem{FM}
I. Fern\'{a}ndez, P.~Mira.
Holomorphic quadratic differentials and the Bernstein problem in Heisenberg space.
\emph{Trans. Amer. Math. Soc.} \textbf{361} (2009), 5737--5752.

\bibitem{HRS}
L. Hauswirth, H. Rosenberg, J. Spruck. On complete mean curvature $\frac{1}{2}$-surfaces in $\mathbb{H}^2\times\mathbb{R}$. \emph{Commun. Anal. Geom.} {\bf 16} (2008), 989--1005.

\bibitem{Heinz}
E. Heinz. \"{U}ber Fl\"{a}chen mit eineindeutiger Projektion auf eine Ebene, deren Kr\"{u}mmungen durch Ungleichungen eingeschr\"{a}nkt sind.
\emph{Math. Ann.} \textbf{129} (1955), 451--454.

\bibitem{JS}
M. A. Javaloyes, M. S\'{a}nchez. A note on the existence of standard splittings for conformally stationary spacetimes, \emph{Class. Quantum. Grav.} \textbf{25} (2008), 168001, 7pp.

\bibitem{KL}
S. Kaya, R. L\'{o}pez.
On the duality between rotational minimal surfaces and maximal surfaces.
Preprint available at arXiv:1703.04018.

\bibitem{KN}
S. Kobayashi, K. Nomizu, \emph{Foundations of Differential Geometry}, Vol. II, Interscience, New York, 1969.

\bibitem{Lee}
H. Lee. 
Extensions of the duality between minimal surfaces and maximal surfaces.
\emph{Geom. Dedicata} \textbf{151} (2011), 373--386.

\bibitem{Lee2}
H. Lee.
Maximal surfaces in Lorentzian Heisenberg space.
\emph{Differential Geom. Appl.} \textbf{29} (2011), no. 1, 73--84.

\bibitem{Lee3}
H. Lee.
Minimal surface systems, maximal surface systems and special Lagrangian equations.
\emph{Trans. Amer. Math. Soc.} \textbf{365} (2013), no. 7, 3775--3797.

\bibitem{Lee4}
H. Lee.
Minimal surface system in Euclidean four-space.
Preprint available at arXiv:1706.05751.

\bibitem{LeeMan}
H. Lee, J. M. Manzano. Generalized Calabi correspondence and complete spacelike surfaces. {\it Asian J. Math.} (to appear).

\bibitem{SLee}
S. Lee.
Timelike surfaces of constant mean curvature $\pm 1$ in anti-de Sitter 3-space $\mathbb{H}^3_1(-1)$.
\emph{Ann. Global Anal. Geom.} \textbf{29} (2006), 361--407.

\bibitem{LerMan}
A. Lerma, J. M. Manzano. Compact stable surfaces with constant mean curvature in Killing submersions. {\it Ann. Mat. Pura Appl.} (to appear).

\bibitem{LH}
J. H. S. de Lira, J. A. Hinojosa.
The Gauss map of minimal surfaces in the Anti-de Sitter space.
\emph{J. Geom. Phys.} \textbf{61} (2011), 610--623.

\bibitem{LLS}
F. J. L\'{o}pez, R. L\'{o}pez, R. Souam.
Maximal surfaces of Riemann type in Lorentz-Minkowski space $\mathbb{L}^3$.
\emph{Michigan Math. J.} \textbf{47} (2000), 469--497.

\bibitem{Man}
J. M. Manzano. On the classification of Killing submersions and their isometries. {\it Pac. J. Math.} {\bf 270} (2014), no. 2, 367--392.

\bibitem{MPR}
J. M. Manzano, J. P\'{e}rez, M. M. Rodr\'{i}guez. Parabolic stable surfaces with constant mean curvature. \emph{Calc. Var.} \textbf{42} (2011), no. 1--2, 137--152.

\bibitem{MN}
J. M. Manzano, B. Nelli.
Height and area estimates for constant mean curvature graphs in $\mathbb{E}(\kappa,\tau)$-spaces.
\emph{J. Geom. Anal.} (to appear).

\bibitem{MR}
J. M. Manzano, M. M. Rodr\'{i}guez.
On complete constant mean curvature vertical multigraphs in $\mathbb{E}(\kappa,\tau)$.
\emph{J. Geom. Anal.} \textbf{25} (2015), no. 1, 336--346.

\bibitem{MMPR}
W. H. Meeks, P. Mira, J. P\'{e}rez, A. Ros.
Constant mean curvature spheres in homogeneous three-manifolds.
Preprint available at arXiv:1706.09394.

\bibitem{NR}
B. Nelli, H. Rosenberg.
Minimal surfaces in $\mathbb{H}^2\times\mathbb{R}$.
\emph{Bull. Braz. Math. Soc.} \textbf{33} (2002), no. 2, 263--292.

\bibitem{Nitsche}
J. Nitsche.
Elementary proof of Bernstein's theorem on minimal surfaces.
\emph{Ann. of Math.} \textbf{66} (1957), no. 3, 543--544.

\bibitem{Osserman}
R. Osserman.
\emph{A survey of minimal surfaces}, Second edition, Dover Publications, Inc., New
York, 1986.

\bibitem{Palmer}
B. Palmer.
Spacelike constant mean curvature surfaces in pseudo-Riemannian space forms.
\emph{Ann. Global Anal. Geom.} \textbf{8} (1990), no. 3, 217--226.

\bibitem{Treibergs} A. Treibergs.
Entire Spacelike Hypersurfaces on Constant Mean Curvature in Minkowski Space.
\emph{Invent. Math.} \textbf{66} (1982), no. 1, 39--56.
 
\bibitem{Wan}
T. Y. Wan.
Constant mean curvature surface harmonic map and universal Teichmuller space. 
\emph{J. Differential Geom.} \textbf{35} (1992), 643--657.

\bibitem{WanAu}
T. Y. Wan, T. K. Au.
Parabolic constant mean curvature spacelike surfaces, 
\emph{Proc. Amer. Math. Soc.} \textbf{120} (1994), 559--564.

\end{thebibliography}
\end{document}